\newcommand{\iO}{\int_{\Omega}}
\newcommand{\iOT}{\int_{\Omega_T}}
\newcommand{\eps}{\varepsilon}
\def\N{\mathbb{N}}
\def\R{\mathbb{R}}
\def\pa{\partial}
\def\var{\varepsilon}
\newtheorem{thm}{Theorem}[section]
\newtheorem{lem}[thm]{Lemma}
\newtheorem{prp}[thm]{Proposition}
\theoremstyle{definition}
\theoremstyle{remark}
\newtheorem{rem}[thm]{Remark}
\title{Improved duality estimates and applications to
  reaction-diffusion equations}
\date{April 13, 2013}
\author{Jos\'e A. Ca\~nizo\footnote{School of Mathematics, University
    of Birmingham, Edgbaston, Birmingham B15 2TT, UK. Email:
    \texttt{j.a.canizo@bham.ac.uk}. Supported by the project MINECO
    MTM2011-27739-C04-02 and the Marie-Curie CIG project KineticCF.}
  \and Laurent Desvillettes\footnote{CMLA, ENS Cachan, CNRS, 
    61 Av. du Pdt. Wilson, 94235 Cachan Cedex,
    France. Email: \texttt{desville@cmla.ens-cachan.fr}}
  \and Klemens Fellner\footnote{Institut f\"ur Mathematik und
    Wissenschaftliches Rechnen, Heinrichstra{\ss}e 36, 8010
    Graz. Email: \texttt{klemens.fellner@uni-graz.at}}
}
\begin{document}

\maketitle

\begin{abstract}
  We present a refined duality estimate for parabolic equations. This
  estimate entails new results for systems of reaction-diffusion
  equations, including smoothness and exponential convergence towards
  equilibrium for equations with quadratic right-hand sides in two
  dimensions. For general systems in any space dimension, we obtain
  smooth solutions of reaction-diffusion systems coming out of
  reversible chemistry under an assumption that the diffusion
  coefficients are sufficiently close one to another.
\end{abstract}

\section{Introduction}
\label{sec:intro}

This paper presents a refined duality estimate for reaction-diffusion
equations arising in the context of reversible chemistry, of the form
\begin{equation}\label{genpol}
  \pa_t a_i - d_i\,\Delta_x a_i = (\beta_i-\alpha_i)
  \left(l \,
    \prod\limits_{j=1}^n a_j^{\alpha_j}
    - k\, \prod\limits_{j=1}^n a_j^{\beta_j}
    \, \right), \qquad i=1..n,
\end{equation}
with the homogeneous Neumann boundary conditions
\begin{equation}
  \label{CN}
  \nabla_x a_i(t,x) \cdot \nu(x) = 0
  \quad
  \text{ for } x \in \pa\Omega,\ t \geq 0
\end{equation}
corresponding to the diffusion of $n$ species ${\mathcal{A}}_i$ with
concentration $a_i := a_i(t,x)\ge 0$, $i=1..n$ at time $t \geq 0$ and
point $x \in \Omega \subset \R^N$, each with its own diffusion
coefficient $d_i\ge 0$, and to the reversible chemical reaction
\begin{equation}\label{un}
\alpha_1\mathcal{A}_1 + \dots + \alpha_n\mathcal{A}_n \rightleftharpoons
\beta_1\mathcal{A}_1 + \dots + \beta_n\mathcal{A}_n,\,\quad \alpha_i,
\beta_i \in \mathbb{N},
\end{equation}
where the above reaction is modelled according to the \emph{mass
  action law} with the stoichiometric coefficients $\alpha_i, \beta_i
\in \mathbb{N}$ and with the (constant) reaction rates $l,k >0$. The
mixture is assumed to be confined in a domain $\Omega \subset \R^N$ as
implied by the homogeneous Neumann condition (\ref{CN}), where
$\nu(x)$ denotes the outward normal vector to $\Omega$ at point $x\in
\pa\Omega$.
\par
Moreover, we systematically denote by $\Omega_T = [0,T] \times \Omega$
(for any given $T>0$) and by $p'$ the H\"older conjugate exponent of
$p$, i.e. $\frac{1}{p} + \frac{1}{p'} = 1$.

\bigskip

The mathematical difficulties in proving existence, smoothness and
large-time behaviour theories for systems like (\ref{genpol}),
(\ref{CN}) increase with the degree of the polynomials terms
(appearing in the r.h.s. of (\ref{genpol})) as well as with the number
$n$ of equations or the space dimension $N$.  \bigskip

The refined duality estimate that we shall derive in this paper
depends yet on another parameter of (\ref{genpol}), namely the maximal
distance between the diffusion rates appearing in (\ref{genpol}), that
is
\begin{equation}
 \delta := \sup_{i= 1..n} \{d_i\}  - \inf_{i= 1..n} \{d_i\}.
\end{equation}
It is easy to see that when $\delta=0$, the system (\ref{genpol}),
(\ref{CN}) can be rewritten as the coupled system between $n-1$ heat
equations (for the sums of two concentrations) and a single
reaction-diffusion equation, which greatly simplifies the analysis
compared to the general case when $\delta >0$. In particular, the
dynamics of system (\ref{genpol}) for $\delta=0$ satisfies a maximum
principle, which fails to be true for general diffusion systems with
$\delta >0$ (except for special systems where the structure of the
reaction terms enforces a maximum principle).
\par 
Perturbation methods can sometimes be used in order to transfer at
least partly the properties of the system with $\delta=0$ to the case
when $\delta>0$ is small, see e.g. \cite{HM}.
\par
Our new estimate is also particularly efficient in the case when
$\delta>0$ is small, but it still gives results for all parameters
$\delta$ in some situations, and even when for example one of the
diffusion rates is $0$. Moreover, when the smallness of $\delta$ is
required, it can be explicitly estimated.  \bigskip

In order to obtain this estimate, we use a combination of ideas coming
from maximal elliptic regularity, a Meyers-type estimate which
provides an explicit perturbation argument, and duality methods in the
line of e.g. \cite{HMP,HM,PS00}. We end up with the following
Proposition for parabolic equations with variable coefficients:

\begin{prp}
  \label{propun} 
  Let $\Omega$ be a bounded domain of $\R^N$ with smooth
  (e.g. $C^{2+\alpha}$, $\alpha>0$) boundary $\partial\Omega$, $T>0$,
  and $p\in ]2,+\infty[$. We consider a coefficient function $M :=
  M(t,x)$ satisfying
  \begin{equation}
    \label{eq:M-bounds}
    0 < a \leq M(t,x) \leq b < +\infty
    \quad \text{ for } (t,x) \in \Omega_T,
  \end{equation}
  for some $0 < a < b < +\infty$, and an initial datum $u_0 \in
  L^p(\Omega)$.  \medskip

  Then, any weak solution $u$ of the parabolic system:
  \begin{equation}
    \label{eq:heat-variable-forward}
    \left\{
      \begin{aligned}
        &\partial_t u - \Delta_x (M u) = 0 &&\quad \text{ on } \quad \Omega_T,
        \\
        &u(0,x) = u_0(x) &&\quad \text{ for } \quad x \in \Omega,
        \\
        &\nabla_x u \cdot \nu(x) = 0 &&\quad \text{ on } \quad [0,T]
        \times \partial \Omega ,
      \end{aligned}
    \right.
  \end{equation}
  satisfies the estimate (where $p'<2$ denotes the H\"older conjugate
  exponent of $p$)
  \begin{equation}
    \label{eq:heat-estimate-forward}
    \|u\|_{L^p(\Omega_T)}
    \leq
    (1 + b \,D_{a,b,p'})\, T^{1/p} \,
    \|u_0\|_{L^p(\Omega)},
  \end{equation}
  and where for any $a,b>0$, $q\in ]1,2[$
  \begin{equation}
    \label{dixsept}
    D_{a,b,q}
    :=
    \frac{C_{\frac{a+b}2,q}}{1 - C_{\frac{a+b}2,q} \frac{b-a}{2}},
  \end{equation}
  provided that the {following condition holds}
  \begin{equation}
    \label{eq:const-small-q}
    C_{\frac{a+b}2,p'} \, \frac{b-a}{2} < 1.
  \end{equation}
  Here, the constant $C_{m,q}>0$ is defined for $m>0$, $q \in ]1,2[$
  as the best (that is, smallest) constant in the parabolic regularity
  estimate
  \begin{equation}
    \label{huit}
    \| \Delta_x v\|_{L^q(\Omega_T)} \le C_{m,q}\,  \|f\|_{L^q(\Omega_T)},
  \end{equation} 
  where  $v:[0,T] \times \Omega \to \R$ is the solution
  of the backward heat equation with homogeneous Neumann boundary conditions:
  \begin{equation}
    \label{quinze}
    \left\{
      \begin{aligned}
        &\partial_t v + m \,\Delta_x v = f &&\quad \text{ on } \quad \Omega_T,
        \\
        &v(T,x) = 0 &&\quad \text{ for } \quad x \in \Omega,
        \\
        &\nabla_x v \cdot \nu (x) = 0 &&\quad \text{ on } \quad  [0,T]
        \times \partial \Omega.
      \end{aligned}
    \right.
  \end{equation}

  We recall that one has $C_{m,q} < \infty$ for $m>0$, $q \in ]1,2[$
  and in particular $C_{m,2} \le\frac{1}{m}$ (cf. Lemma~\ref{lem21}
  below).  Note that the constant $C_{m,q}$ may depend (besides on $m$
  and $q$) also on the domain $\Omega$ and the space dimension $N$,
  but {does not depend on the time $T$}.
\end{prp}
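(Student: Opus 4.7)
The plan is to proceed by duality. Since
\[
\|u\|_{L^p(\Omega_T)} = \sup\left\{ \left|\int_{\Omega_T} u\,f\right| : f\in L^{p'}(\Omega_T),\ \|f\|_{L^{p'}(\Omega_T)} \le 1\right\},
\]
I would fix such an $f$ and introduce the backward dual problem (\ref{quinze}) with the specific choice $m := \frac{a+b}{2}$. This choice is natural because it simultaneously permits the application of the constant-coefficient parabolic regularity estimate (\ref{huit}) to $v$ and minimises the ``defect'' $\|M-m\|_{L^\infty(\Omega_T)} \le \frac{b-a}{2}$, which will control the perturbation from the constant-coefficient case.

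Next, I would use $v$ as a test function in the weak formulation of $\partial_t u - \Delta_x(Mu) = 0$, integrating by parts once in $t$ (using $v(T,\cdot) = 0$) and twice in $x$ (using the Neumann boundary conditions on both $v$ and $Mu$), and substituting $\partial_t v = f - m\Delta v$, to reach the key identity
\[
\int_{\Omega_T} u\,f \;=\; -\int_\Omega u_0(x)\,v(0,x)\,dx \;+\; \int_{\Omega_T} (m-M)\,u\,\Delta v.
\]
The second term is controlled via $\|m-M\|_\infty \le (b-a)/2$ and the bound $\|\Delta v\|_{L^{p'}(\Omega_T)} \le C_{m,p'}\,\|f\|_{L^{p'}(\Omega_T)}$ from (\ref{huit}). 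For the first, I would exploit the elementary representation $v(0,\cdot) = -\int_0^T \partial_t v\,dt = -\int_0^T f\,dt + m\int_0^T \Delta v\,dt$, apply H\"older in time (producing a factor $T^{1/p}$), and invoke (\ref{huit}) once more to obtain $\|v(0,\cdot)\|_{L^{p'}(\Omega)} \le T^{1/p}(1 + m\,C_{m,p'})\,\|f\|_{L^{p'}(\Omega_T)}$.

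Combining the two and taking the supremum over admissible $f$ yields
\[
\|u\|_{L^p(\Omega_T)} \;\le\; T^{1/p}(1 + m\,C_{m,p'})\,\|u_0\|_{L^p(\Omega)} \;+\; \tfrac{b-a}{2}\,C_{m,p'}\,\|u\|_{L^p(\Omega_T)}.
\]
Under the smallness condition (\ref{eq:const-small-q}) the last term can be absorbed on the left, and a short algebraic manipulation using $m = (a+b)/2$ verifies that the resulting prefactor equals $(1 + b\,D_{a,b,p'})\,T^{1/p}$, giving exactly (\ref{eq:heat-estimate-forward}). The main technical obstacle will be rigorously justifying the integration-by-parts step for merely weak solutions; I would handle this by a standard approximation argument: regularise $M$, $u_0$ (and if needed $f$), establish the identity for the resulting classical solutions where all manipulations are transparent, and pass to the limit using that the final bound depends only on $a$, $b$, $p$, $T$ and the data norms. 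Existence of the dual solution $v$ with the required regularity for (\ref{huit}) is standard on the smooth bounded domain.
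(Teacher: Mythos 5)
Your proof is correct and arrives at exactly the right constant, but it organizes the perturbation argument genuinely differently from the paper. The paper puts the variable coefficient into the dual problem: in Lemma~\ref{lem23} it works with the backward equation $\partial_t v + M\,\Delta_x v = f$, rewrites it as $\partial_t v + m\,\Delta_x v = (m-M)\,\Delta_x v + f$ with $m=(a+b)/2$, and absorbs $\|(m-M)\Delta_x v\|_{L^{p'}}$ into the left-hand side under \eqref{eq:const-small-q}; this yields $\|\Delta_x v\|_{L^{p'}(\Omega_T)} \le D_{a,b,p'}\|f\|_{L^{p'}(\Omega_T)}$, hence $\|\partial_t v\|_{L^{p'}}\le(1+b\,D_{a,b,p'})\|f\|_{L^{p'}}$ and $\|v(0,\cdot)\|_{L^{p'}(\Omega)}\le(1+b\,D_{a,b,p'})\,T^{1/p}\,\|f\|_{L^{p'}(\Omega_T)}$, after which the duality pairing is exact, $\int_{\Omega_T}uf = -\int_\Omega u_0\,v(0,\cdot)\,dx$, with no remainder. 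You instead keep the dual problem constant-coefficient (only \eqref{quinze} with $m=(a+b)/2$ is ever solved) and let the perturbation surface inside the pairing as $\int_{\Omega_T}(m-M)\,u\,\Delta_x v$, which you absorb into $\|u\|_{L^p(\Omega_T)}$ at the very end; the identity $\frac{1+m\,C_{m,p'}}{1-C_{m,p'}(b-a)/2}=1+b\,D_{a,b,p'}$ indeed reproduces the paper's constant under the same smallness condition. Your route buys a simpler dual problem --- only the classical estimate of Lemma~\ref{lem21} is needed, whereas the paper must implicitly produce a solution of a backward parabolic problem with merely bounded measurable coefficients. The price is that your final absorption step requires $\|u\|_{L^p(\Omega_T)}<\infty$ a priori (an inequality $X\le A+\theta X$ gives nothing when $X=+\infty$), which the paper's version avoids since its bound on $\bigl|\int_{\Omega_T}uf\bigr|$ contains no $\|u\|_{L^p}$ term; you correctly flag that this is handled by running the argument on regularized solutions and passing to the limit.
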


The consequences of Proposition \ref{propun} can be best understood in
the case of the most standard reversible chemical reaction, that is
when (\ref{un}) writes
\begin{equation}
    \label{a1a4}
\mathcal{A}_1 + \mathcal{A}_3 \rightleftharpoons \mathcal{A}_2 + \mathcal{A}_4
\end{equation}
and (\ref{genpol}) becomes (after the rescaling of the nonessential constants $k$ and $l$ to unity)
\begin{equation}
    \label{trois}
    \left\{
      \begin{aligned}
\pa_t a_1 - d_1 \, \Delta_x a_1 &= (a_4\,a_2 - a_1\, a_3),\\
\pa_t a_2 - d_2 \, \Delta_x a_2 &= -(a_4\,a_2 - a_1\, a_3),\\
\pa_t a_3 - d_3 \, \Delta_x a_3 &=  (a_4\,a_2 - a_1\, a_3),\\
\pa_t a_4 - d_4 \, \Delta_x a_4 &=  -(a_4\,a_2 - a_1\, a_3).
    \end{aligned}
    \right.
\end{equation}
We recall that for this system (with the boundary conditions
(\ref{CN}) and provided that $d_i>0$ for $i=1..4$), existence of weak
solutions in $L^2 (\log L)^2$ was obtained in \cite{DFPV}, together
with the existence of strong (smooth) solutions when $N=1$. This
result was improved by Th. Goudon and A. Vasseur in \cite{GV} thanks
to a careful use of De Giorgi's method, see e.g. \cite{DeG}. They
showed that strong solutions also exist when $N=2$. We also refer to
\cite{CV}, where smooth solutions were shown to exist in any dimension
for systems with a nonlinearity of power law type which is strictly
subquadratic, see also e.g. \cite{Ama}.  \bigskip

We also recall two results on exponential convergence to the
equilibrium: First, exponential convergence in any $H^p$ norm in the
one-dimensional case $N=1$ was obtained for the system (\ref{trois})
with boundary conditions (\ref{CN}) in \cite{DF08}. This result is
based on the use of the entropy/entropy dissipation method with slowly
growing a priori $L^{\infty}$-bounds (cf. \cite{TVillani}, \cite{DM}).
It required ``at most polynomially growing w.r.t. $T$'' bounds for the
quantities $\sup_{t \in [0,T]} \|
a_i(t,\cdot)\|_{L^{\infty}(\Omega)}$. A related, yet non-explicit
approach to entropy methods for reaction-diffusion type systems can be
found e.g. in \cite{Gro92,GGH96}.

In a later improvement \cite{DFEqua}, the authors showed exponential
convergence to equilibrium in relative entropy avoiding any
$L^{\infty}$-bounds on the solution. Thus, interpolating the weak
global $L^2$ solutions constructed in \cite{DFPV}, one obtains
exponential convergence towards equilibrium in any $L^p$ norm with
$p<2$ for all space dimension $N>1$.

It is interesting to point out that for space dimensions $N\ge3$ the
existence of global classical solutions for general (even constant)
diffusion coefficients and initial data is an open problem despite the
fact that all $L^2$ solutions converge exponentially towards the
constant equilibrium in $L^p$ with $p<2$. Up to our knowledge, it is
only known that if solutions to (\ref{trois}) with boundary conditions
(\ref{CN}) would blow-up in the $L^{\infty}$ norm, then such a
concentration phenomenon would need to occur in at least two densities
$a_i$ at the same position $x_0\in\Omega$ at the same time $t_0>0$
\cite{HM}.  Moreover, an upper bound on the Hausdorff dimension of
singularities was given in \cite{GV}.  \bigskip

Thanks to Proposition \ref{propun}, this paper is able to provide a
direct proof of the result in \cite{GV} (that is, without use of De
Giorgi's method) when $N=2$.  Moreover, we can obtain immediately the
exponential convergence of the solution of (\ref{trois}), (\ref{CN})
towards equilibrium in $L^{\infty}$, which is a significant
improvement on the above mentioned $L^{2-0}$-convergence of
\cite{DFEqua}. It is remarkable that in this specific case, \emph{no
  smallness requirement for $\delta$} appears in the assumptions.
More precisely, we prove the

\begin{prp}\label{propdeux}
  Let $\Omega$ be a bounded domain of $\R^2$ with smooth
  (e.g. $C^{2+\alpha}$, $\alpha>0$) boundary $\partial\Omega$. For all
  $[i=1..4]$ assume positive diffusion coefficients $d_i>0$ and
  nonnegative initial data $a_{i0} \in L^{\infty}(\Omega)$.  \medskip

  Then, there exists a weak nonnegative solution $a_i \in
  L^{\infty}([0, +\infty[ \times \Omega)$ to the system
  (\ref{trois}) with homogenenous Neumann boundary conditions
  (\ref{CN}) subject to the initial data $a_{i0}$ for all $[i=1..4]$.
  \medskip

  Moreover, we denote for $[i=1..4]$ by $a_{i\infty} >0$ the
  equilibrium values of the concentrations $a_i$: Thus,
  $\{a_{i\infty}\}_{i=1..4}$ is the unique vector of positive
  constants balancing the reaction rate
  $$a_{1\infty}\, a_{3\infty} = a_{2\infty}\, a_{4\infty}
  $$ 
  and satisfying the three (linear independent) mass-conservation laws 
  \begin{align*}
    a_{1\infty} + a_{2\infty} &= \frac{1}{|\Omega|}\iO (a_{10} + a_{20})\,dx,\\
    a_{1\infty} + a_{4\infty} &= \frac{1}{|\Omega|}\iO (a_{10} + a_{40})\,dx,\\
    a_{2\infty} + a_{3\infty} &= \frac{1}{|\Omega|}\iO (a_{20} + a_{30})\,dx.
  \end{align*}
  Then, there exist two constants $\kappa_1, \kappa_2>0$ such that
  \begin{equation}\label{quatre}
    \forall t \ge 0, \qquad \qquad \sum_{i=1}^4 \| a_i(t,\cdot) - a_{i\infty}\|_{L^{\infty}(\Omega)} \le \kappa_1\, e^{- \kappa_2\, t} .
  \end{equation}
  \par
  Moreover, the norm $\|a_i\|_{L^{\infty}([0, +\infty[ \times\Omega)}$
  and the constants $\kappa_1, \kappa_2$ can be explicitly bounded in
  terms of the domain $\Omega$, space dimension $N$, the norm
  $\|a_{i0}\|_{L^{\infty}(\Omega)}$ of the initial data and the
  diffusion coefficients $d_i$, $[i=1..4]$.
\end{prp}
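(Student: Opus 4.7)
The plan is to combine Proposition~\ref{propun} with a two-dimensional parabolic bootstrap and with the known exponential $L^p$ ($p<2$) convergence of \cite{DFEqua}. The key observation is that whenever two equations of (\ref{trois}) have opposite reaction terms, e.g.\ for $(a_1,a_2)$, the sum $u := a_1 + a_2$ satisfies
\begin{equation*}
\pa_t u - \Delta_x(M u) = 0, \qquad M := \frac{d_1 a_1 + d_2 a_2}{a_1 + a_2} \in [\min(d_1,d_2),\max(d_1,d_2)],
\end{equation*}
and analogously for $(a_2,a_3)$, $(a_3,a_4)$, $(a_1,a_4)$. Since $C_{m,2}\le 1/m$, the smallness condition (\ref{eq:const-small-q}) at $p=2$ reads $(b-a)/(a+b)<1$, which is always true. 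By (near) continuity of $q \mapsto C_{m,q}$ at $q=2$, (\ref{eq:const-small-q}) remains valid for some $p'<2$, i.e.\ some $p_0>2$, regardless of the size of $\delta$. Proposition~\ref{propun} then yields $a_i \in L^{p_0}(\Omega_T)$ with norm controlled by the initial data.

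Next I bootstrap in dimension $N=2$. Since the sources are quadratic, $a_i a_j \in L^{p_0/2}(\Omega_T)$ with $p_0/2>1$. Parabolic maximal regularity applied to each equation of (\ref{trois}), combined with the anisotropic Sobolev embedding $W^{2,1}_q \hookrightarrow L^r$ with $1/r = 1/q - 2/(N+2)$ (whenever positive), upgrades $a_i$ to $L^{p_1}(\Omega_T)$ with $p_1 = 2p_0/(4-p_0)$ as long as $p_0<4$. Since $p_{k+1}-p_k = p_k(p_k-2)/(4-p_k)>0$, the iteration strictly increases and reaches $p_k>4$ in finitely many steps; the source then lies in $L^q$ with $q>(N+2)/2=2$, and hence $a_i \in L^\infty(\Omega_T)$. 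Existence of a nonnegative weak solution with these bounds follows from a standard truncation/approximation scheme for (\ref{trois}).

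For the exponential $L^\infty$ decay and the uniform-in-time bounds I combine the above with $\|a_i(t) - a_{i\infty}\|_{L^p(\Omega)} \le C e^{-\lambda t}$ ($p<2$) from \cite{DFEqua}. Applying Proposition~\ref{propun} and the bootstrap on each time window $[t,t+1]$ with initial datum $a_i(t,\cdot)$, the $L^\infty$-bound on $[t,t+1]\times\Omega$ depends only on $\|a_i(t,\cdot)\|_{L^\infty(\Omega)}$; parabolic $C^\alpha$ estimates with bounded source then yield a uniform spatial H\"older bound. Interpolating the exponentially small $L^p$ norm against this H\"older norm,
\begin{equation*}
\|a_i(t)-a_{i\infty}\|_{L^\infty(\Omega)} \le C \|a_i(t)-a_{i\infty}\|_{L^p(\Omega)}^{\theta} \|a_i(t)-a_{i\infty}\|_{C^\alpha(\Omega)}^{1-\theta} \le \kappa_1 e^{-\kappa_2 t}
\end{equation*}
for some $\theta \in (0,1)$, which gives (\ref{quatre}). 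The uniform-in-time $L^\infty$ bound on each $a_i$ then follows by gluing the short-time bound from the bootstrap with this long-time decay.

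The main obstacle will be the bootstrap step, since $N=2$ is exactly the critical case: the critical source exponent $(N+2)/2=2$ matches the best integrability produced by Proposition~\ref{propun} at $p=2$. One must therefore genuinely exploit the strict inequality $p_0>2$ granted by (\ref{eq:const-small-q}) at $p'<2$, and verify that the iteration escapes the range $p_k<4$ in finitely many steps. A subsidiary technical point is the behaviour of $q \mapsto C_{m,q}$ near $q=2$, which is needed to initialize the bootstrap at some $p_0>2$.
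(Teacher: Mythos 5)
Your treatment of the $L^\infty(\Omega_T)$ bound follows the paper's route almost exactly: a duality estimate initialized at some $p_0>2$ (this is Lemma \ref{lem32}, applied to sums of concentrations solving $\pa_t u-\Delta_x(Mu)=0$) followed by the two-dimensional bootstrap on the quadratic right-hand side (Lemmas \ref{lem33}--\ref{lem37}); your iteration $p_{k+1}=2p_k/(4-p_k)$ is precisely \eqref{eq:iteration} for $N=2$. The ``subsidiary technical point'' you flag --- why \eqref{eq:const-small-q} survives for some $p'<2$ --- is resolved in the paper not by continuity of $q\mapsto C_{m,q}$ (which is not available a priori) but by interpolating the estimate \eqref{huit} between $q=3/2$ and $q=2$, giving $C_{m,r}\le m^{-\theta}\,(C_{m,3/2})^{1-\theta}$ with $\theta\to1$ as $r\to2^-$; since $C_{m,2}\le 1/m$ this yields $\limsup_{r\to 2^-}C_{m,r}\,\frac{b-a}{2}\le\frac{b-a}{a+b}<1$, which is exactly the initialization you need. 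So that half of your argument is sound once the interpolation is inserted.

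The genuine gap is in the passage to exponential $L^{\infty}$ convergence. Your windowed argument is circular: the $L^\infty$ bound you produce on $[t,t+1]\times\Omega$ depends, nonlinearly through the quadratic bootstrap, on $\|a_i(t,\cdot)\|_{L^\infty(\Omega)}$, and nothing in your argument shows that this quantity remains bounded as $t\to\infty$; iterating $M_{k+1}\le F(M_k)$ with a superlinear $F$ may blow up, so the ``uniform spatial H\"older bound'' you interpolate against is not established (establishing it is essentially equivalent to the decay you are trying to prove). The paper breaks this circle differently: every constant in Lemmas \ref{lem32}--\ref{lem37} is tracked to grow at most \emph{polynomially} in $T$, so one gets a global bound $\|a_i\|_{L^\infty(\Omega_T)}\le C(1+T)^k$; the entropy/entropy-dissipation method of \cite{DF08} and \cite{DFEqua} is designed precisely to absorb such slowly growing a priori bounds and still yield exponential decay of the relative entropy, and interpolating the resulting exponentially small low-order norm against the polynomially growing $L^\infty$ bound gives \eqref{quatre} and, a posteriori, the uniform-in-time $L^\infty$ bound. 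Your window argument could be repaired by initializing each window from a norm that the entropy decay does control uniformly in time (e.g. an $L^{3/2}$ norm, admissible by the improvement of Remark \ref{rein}), but as written the step does not close.
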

\bigskip

Proposition \ref{propun} also entails that a similar result to
Proposition \ref{propdeux} holds in any space dimension provided that
$\delta>0$ is small enough. More precisely, we prove the:

\begin{prp}\label{proptrois}
  Let $\Omega$ be a bounded domain of $\R^N$ with smooth
  (e.g. $C^{2+\alpha}$, $\alpha>0$) boundary $\partial\Omega$.  For
  all $[i=1..4]$ assume positive diffusion coefficients $d_i>0$ such
  that $0<a = \inf \{d_i\}_{i=1..4}$, $0<b=\sup \{d_i\}_{i=1..4}$ and
  nonnegative initial data $a_{i0} \in L^{\infty}(\Omega)$ ($i=1..4$).
  \medskip

  Then, if $\delta= b-a < 2\,(C_{\frac{a+b}2, 1 + 2/N})^{-1}$, there
  exists a nonnegative weak solution $a_i \in L^{\infty}([0, +\infty[
  \times \Omega)$ to the system
  (\ref{trois}) with homogeneous Neumann boundary conditions
  (\ref{CN}) subject to the initial data $a_{i0}$.
  \medskip

  Moreover (with the notation of the previous Proposition \ref
  {propdeux}), there exist two constants $\kappa_1, \kappa_2>0$ such
  that
  \begin{equation}\label{quatrebis}
    \forall t \ge 0, \qquad \qquad
    \sum_{i=1}^4 \| a_i(t,\cdot) - a_{i\infty}\|_{L^{\infty}(\Omega)}
    \le \kappa_1\, e^{- \kappa_2\, t}.
  \end{equation}
  \par
  Here, the norm $\|a_i\|_{L^{\infty}([0, +\infty[ \times\Omega)}$ and
  the constants $\kappa_1, \kappa_2$ can be explicitly bounded in
  terms of the domain $\Omega$, space dimension $N$, the norm
  $\|a_{i0}\|_{L^{\infty}(\Omega)}$ of the initial data, and the
  diffusion coefficients $d_i$, $(i=1..4)$.
\end{prp}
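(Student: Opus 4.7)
The plan is to combine the refined duality estimate of Proposition \ref{propun} with a parabolic bootstrap and the entropy method, closely following the strategy of Proposition \ref{propdeux} but with a starting exponent dictated by the smallness assumption on $\delta$.

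The starting point is the observation that, thanks to the cancellation of the reaction term in appropriate combinations (the same combinations underlying the three conservation laws in the statement), each of the four sums $u_{12}:=a_1+a_2$, $u_{14}:=a_1+a_4$, $u_{23}:=a_2+a_3$ and $u_{34}:=a_3+a_4$ satisfies an equation of the form
\begin{equation*}
    \pa_t u_{ij} - \Delta_x (M_{ij}\, u_{ij}) = 0,
    \qquad
    M_{ij} := \frac{d_i a_i + d_j a_j}{a_i + a_j} \in [a,b]
\end{equation*}
(extending $M_{ij}$ arbitrarily where $u_{ij}=0$). The assumption $\delta < 2(C_{(a+b)/2,\,1+2/N})^{-1}$ is exactly condition \eqref{eq:const-small-q} at $p' = 1+2/N$; by continuity of $q \mapsto C_{m,q}$ it persists for some $q$ slightly below $1+2/N$, so Proposition \ref{propun} applied to the four sums yields, since $a_i \geq 0$,
\begin{equation*}
\|a_i\|_{L^{p_0}(\Omega_T)} \leq C\, T^{1/p_0}\, \|a_{i0}\|_{L^{p_0}(\Omega)},
\qquad p_0 > \frac{N+2}{2},\ i=1,\dots,4.
\end{equation*}

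This $L^{p_0}$ bound is bootstrapped to $L^\infty$ as follows. The reaction term in each equation for $a_i$ is quadratic in the densities, hence bounded in $L^{p_0/2}(\Omega_T)$. Parabolic maximal regularity for $\pa_t - d_i \Delta_x$ with Neumann boundary conditions, together with the anisotropic Sobolev embedding $W^{2,1}_q(\Omega_T) \hookrightarrow L^s(\Omega_T)$ with $1/s = 1/q - 2/(N+2)$ for $q<(N+2)/2$ (and $\hookrightarrow L^\infty$ if $q>(N+2)/2$), upgrades $a_i \in L^{p_0}$ to $a_i \in L^{p_1}$ with $1/p_1 = 2/p_0 - 2/(N+2)$. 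Writing $x_k := 1/p_k$, the iteration $x_{k+1} = 2 x_k - 2/(N+2)$ starting from $x_0 < 2/(N+2)$ doubles the gap to the critical value at each step, so after finitely many iterations one reaches $p_k > N+2$ and a last application of the embedding produces an $L^\infty(\Omega_T)$ bound, uniform in $T$ provided the initial $L^\infty$ norms are. Existence of a nonnegative weak solution then follows by applying this construction to truncated/regularized systems (whose global smooth nonnegative solutions are elementary) and passing to the limit using these bounds.

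For the exponential convergence, the uniform-in-time $L^\infty$ bound is combined with the exponential decay in relative entropy of \cite{DFEqua}, which via Csisz\'ar--Kullback--Pinsker yields exponential $L^1$-convergence towards equilibrium; interpolation with the $L^\infty$ bound gives exponential $L^p$-convergence for every $p<\infty$. A final parabolic regularity bootstrap applied to the differences $a_i - a_{i\infty}$ (whose equations have quadratically small right-hand side in suitable $L^p$-norms once the densities are close to equilibrium) upgrades this to the $L^\infty$ exponential convergence \eqref{quatrebis}. The principal delicate point is the very first step: Proposition \ref{propun} provides an $L^p$ bound exactly at the critical borderline $p=(N+2)/2$, and one must exploit the (quantitative) continuity of $C_{m,q}$ in $q$ to gain the strict inequality needed to launch the bootstrap; this same quantitative control is what eventually allows $\kappa_1,\kappa_2$ to be bounded explicitly in terms of the data.
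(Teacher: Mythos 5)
Your overall strategy coincides with the paper's: the reaction terms cancel in suitable linear combinations (you use the four pairwise sums $u_{ij}$, the paper uses the single total sum $\sum_{i=1}^4 a_i$ in Lemma \ref{lem31} --- both work identically, since $0\le a_i\le u$), Proposition \ref{propun} then yields an $L^{p_0}(\Omega_T)$ bound with $p_0>(N+2)/2$ once the strict inequality in the hypothesis is exploited (the paper secures this via the interpolation/log-convexity argument of Lemma \ref{lem32} and its analogue in the proof of Proposition \ref{propquatre}, rather than an unproved ``continuity of $q\mapsto C_{m,q}$'', but the content is the same), and your iteration $1/p_{k+1}=2/p_k-2/(N+2)$ is exactly the bootstrap of Lemmas \ref{lem36}--\ref{lem37}.

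There is, however, one step that fails as written: the claim that the bootstrap ``produces an $L^\infty(\Omega_T)$ bound, uniform in $T$ provided the initial $L^\infty$ norms are.'' It does not. The duality estimate \eqref{eq:heat-estimate-forward} carries an explicit factor $T^{1/p}$, and each application of the smoothing Lemma \ref{lem33} introduces further constants that grow with $T$; the paper goes to considerable lengths (Lemma \ref{lem33} and Remark \ref{rema}) precisely to establish that this growth is \emph{at most polynomial} in $T$, which is the best available at this stage. Since you then feed the alleged uniform bound into the entropy argument, the convergence step is, as stated, unjustified. The repair is exactly the paper's point: the entropy/entropy-dissipation estimate of \cite{DFEqua} (or \cite{DF08}) is designed to tolerate $L^\infty$ bounds growing at most polynomially in $T$ --- exponential decay of the relative entropy beats polynomial growth in the interpolation --- and it is only \emph{after} the exponential convergence \eqref{quatrebis} is obtained that one deduces the uniform-in-time bound $a_i\in L^\infty([0,+\infty[\times\Omega)$. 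With that correction (and replacing your truncation-based existence step by the citation of \cite{DFPV}, which is immaterial), your argument matches the paper's proof.
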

\bigskip

\bigskip

A further result based on Proposition \ref{propun} states an existence
theorem for weak and (with more stringent assumption) bounded weak
solutions of (\ref{genpol}), (\ref{CN}) when the r.h.s. of
(\ref{genpol}) is not necessarily quadratic anymore. An assumption
about the smallness of $\delta>0$ is still needed here.

\begin{prp}\label{propquatre}
  Let $\Omega$ be a bounded domain of $\R^N$ with smooth
  (e.g. $C^{2+\alpha}$, $\alpha>0$) boundary $\partial\Omega$.  For
  all $i=1..n$ assume positive diffusion coefficients $d_i>0$ such
  that $0<a = \inf \{d_i\}_{i=1..n}$, $0<b=\sup \{d_i\}_{i=1..n}$ and
  nonnegative initial data $a_{i0} \in L^{\infty}(\Omega)$.  Moreover,
  let $k,l>0$, $\alpha_i, \beta_i \in \N$ be such that at least two
  coefficients $\beta_i - \alpha_i$ are different from $0$ and have
  opposite signs. We define $Q= \sup\{ \sum_{i=1}^n \alpha_i,
  \sum_{i=1}^n \beta_i\}$ and assume that $Q\ge 3$.  \medskip

\begin{description}
 \item Then, if $\delta= b-a < 2\,(C_{\frac{a+b}2, Q'})^{-1}$ (where $Q'$ is the H\"older conjugate of $Q$), there exists a nonnegative weak solution  
$a_i \in L^{Q}(\Omega_T)$ for all $T>0$ to the system
(\ref{genpol}), (\ref{CN}) with the initial data $a_{i0}$. 
\item Moreover, if $\delta= b-a < 2\,(C_{\frac{a+b}2, \frac{(Q-1)\,(N+2)}{(Q-1)\,(N+2) - 2} })^{-1}$,
then the solution $a_i$ lies in $L^{\infty}(\Omega_T)$ for all $T>0$.
\end{description}
\end{prp}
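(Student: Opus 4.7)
The plan is to build approximate solutions $(a_i^{\eps})_{i=1..n}$ by truncating the reaction rate $R(a) := l\prod_j a_j^{\alpha_j} - k\prod_j a_j^{\beta_j}$ (for instance by $R_{\eps}(a) := R(a)/(1+\eps|R(a)|)$, which preserves the quasi-positive structure of the right-hand sides and hence the nonnegativity of solutions), so that standard semilinear parabolic theory yields nonnegative global smooth solutions $a_i^{\eps}$. The rest of the argument consists in deriving $\eps$-uniform a priori estimates through Proposition~\ref{propun} strong enough to pass to the limit $\eps \to 0$.

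The central estimate comes from the following linear-combination trick. The assumption that at least two coefficients $\beta_i - \alpha_i$ are nonzero with opposite signs allows one to choose strictly positive weights $c_1,\ldots,c_n > 0$ satisfying $\sum_i c_i(\beta_i - \alpha_i) = 0$. Setting $u := \sum_i c_i a_i^{\eps}$, the reaction terms cancel in the sum and $u$ solves the scalar variable-coefficient problem
\begin{equation*}
  \pa_t u - \Delta_x(M\,u) = 0,
  \qquad M(t,x) := \frac{\sum_i c_i\,d_i\,a_i^{\eps}}{\sum_i c_i\,a_i^{\eps}},
\end{equation*}
with homogeneous Neumann conditions. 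Because $a_i^{\eps} \ge 0$ and $c_i > 0$, the coefficient $M$ is a pointwise convex combination of $\{d_i\}_{i=1..n}$, so $a \le M \le b$. Proposition~\ref{propun} then applies with $p := Q \ge 3$ (whose H\"older conjugate $p' = Q'$ is exactly the exponent entering the smallness assumption $\delta < 2/C_{(a+b)/2,Q'}$) and produces an $\eps$-uniform bound on $\|u\|_{L^Q(\Omega_T)}$ depending only on $\|u_0\|_{L^Q(\Omega)}$, $\Omega$, $T$, $a$ and $b$. Since $0 \le a_i^{\eps} \le u/c_i$, this transfers to a uniform $L^Q(\Omega_T)$ bound on every $a_i^{\eps}$.

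For the first statement, this $L^Q$ control combined with the equality $\max(\sum_j\alpha_j,\sum_j\beta_j) = Q$ gives via H\"older's inequality that $R(a^{\eps})$ is $\eps$-uniformly bounded in $L^1(\Omega_T)$. Standard parabolic energy and Aubin--Lions compactness arguments applied to each equation then yield a.e.\ convergence along a subsequence, and equiintegrability of the reaction terms (inherited from the strict $L^Q$ estimate via a De la Vall\'ee Poussin-type argument) allows passing to the limit in the weak formulation.

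For the second statement, the stronger smallness of $\delta$ permits, by continuity of $C_{m,q}$ in $q$, to invoke Proposition~\ref{propun} with $p' = r$ strictly below $(Q-1)(N+2)/((Q-1)(N+2)-2)$, i.e.\ with $p = r' > (Q-1)(N+2)/2$. This produces $a_i \in L^{p_0}(\Omega_T)$ for some $p_0 > (Q-1)(N+2)/2$, from which a standard parabolic bootstrap closes the argument: treating $\pa_t a_i - d_i \Delta_x a_i = (\beta_i - \alpha_i) R(a)$ as a linear heat equation with right-hand side in $L^{p_k/Q}$, maximal parabolic regularity together with the parabolic Sobolev embedding (with gain exponent $\tfrac{1}{s} = \tfrac{1}{q} - \tfrac{2}{N+2}$) improves $p_k$ via $p_{k+1} = p_k(N+2)/(Q(N+2)-2p_k)$, which is strictly increasing precisely as long as $p_k > (Q-1)(N+2)/2$. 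After finitely many iterations $p_k$ exceeds $Q(N+2)/2$, so that $R(a)$ sits in $L^q$ with $q > (N+2)/2$ and the corresponding parabolic embedding into $L^\infty$ yields $a_i \in L^\infty(\Omega_T)$. The main obstacle lies in the first statement: in the borderline case $\sum_j\alpha_j = Q$ (or $\sum_j\beta_j = Q$) the $L^Q$ estimate only gives $R(a^{\eps}) \in L^1$, and securing the equiintegrability required for the weak limit forces one to combine the duality bound with the parabolic smoothing of each individual equation rather than to use duality in isolation.
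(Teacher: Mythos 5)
Your overall strategy is the same as the paper's: approximate the reaction term by a bounded truncation preserving nonnegativity, take positive weights $c_i$ with $\sum_i c_i(\beta_i-\alpha_i)=0$ so that $u=\sum_i c_i a_i^{\eps}$ solves $\pa_t u-\Delta_x(Mu)=0$ with $a\le M\le b$, apply Proposition~\ref{propun} with $p=Q$ (resp.\ $p'$ near $\frac{(Q-1)(N+2)}{(Q-1)(N+2)-2}$), and bootstrap via the heat-kernel lemma for the $L^\infty$ part; your iteration $p_{k+1}=p_k(N+2)/(Q(N+2)-2p_k)$ is exactly the paper's $z_{k+1}=z_k/(Q-\frac{2}{N+2}z_k)$.

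The genuine gap is in the passage to the limit for the first statement, and you have correctly sensed it but not closed it. A uniform $L^Q(\Omega_T)$ bound on $a_i^{\eps}$ only bounds $R(a^{\eps})$ in $L^1(\Omega_T)$, and a bounded family in $L^1$ is \emph{not} equiintegrable; a De la Vall\'ee Poussin argument requires a uniform bound in some strictly better Orlicz or Lebesgue class, which you do not have at this stage. Likewise, the ``standard parabolic energy and Aubin--Lions'' step is not available: an energy estimate would require controlling $\int R(a^{\eps})\,a_i^{\eps}$, a term of degree $Q+1$ that the $L^Q$ bound does not reach, and maximal regularity with a right-hand side merely in $L^1$ fails (Lemma~\ref{lem33} needs $q>1$). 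Your proposed repair (``combine duality with the parabolic smoothing of each individual equation'') is not developed and does not obviously work. The paper's resolution is precisely the device you already use for the second statement but not for the first: since $\delta<2(C_{\frac{a+b}{2},Q'})^{-1}$ is a \emph{strict} inequality and $q\mapsto C_{m,q}$ can be controlled by the interpolation of Lemma~\ref{lem32}, condition \eqref{eq:const-small-q} still holds for some $p'<Q'$, i.e.\ Proposition~\ref{propun} applies with some $p=Q+\var$. This yields a uniform $L^{Q+\var}(\Omega_T)$ bound on $a_i^{\eps}$, hence a uniform $L^{1+\var/Q}(\Omega_T)$ bound on the truncated reaction terms; maximal regularity at exponent $1+\var/Q$ then gives compactness, a.e.\ convergence, strong $L^Q$ convergence by Vitali, and equiintegrability of $R(a^{\eps})$, which is what lets one pass to the limit in the weak formulation. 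With that single adjustment your argument matches the paper's proof.
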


Finally, we show that there exist bounded weak solutions to the system (\ref{trois}) with homogeneous Neumann boundary conditions (\ref{CN}) 
in space dimension~2 even when one of the diffusion rates (say $d_4$ w.l.o.g.) is equal to $0$.
We remark that existence of global weak solutions
in the case of degenerate diffusion was also shown in 
\cite{DFEqua}.

\begin{prp}\label{propcinq}
Let $\Omega$ be a bounded domain of $\R^2$ with smooth (e.g. $C^{2+\alpha}$, $\alpha>0$) boundary $\partial\Omega$. For all $[i=1..4]$ assume nonnegative initial data $a_{i0} \in L^{\infty}(\Omega)$ and nonnegative diffusion coefficients $d_i\ge0$ with e.g. $d_i>0$, $[i=1..3]$ and $d_4=0$.
\medskip

Then, there exists a nonnegative weak solution  $a_i$ lying in $L^{\infty}(\Omega_T)$
for all $T>0$ to the system
(\ref{trois}), (\ref{CN}) (without the Neumann boundary condition on $a_4$) subject to the 
initial data $a_{i0}$.
\end{prp}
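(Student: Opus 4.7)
The natural plan is a regularization-and-compactness argument. I would replace the degenerate coefficient $d_4 = 0$ by $d_4^\eps := \eps > 0$, complemented by a homogeneous Neumann condition on $a_4^\eps$. Proposition \ref{propdeux} then applies in $N = 2$ with four strictly positive diffusion constants and produces a nonnegative bounded weak solution $(a_i^\eps)_{i=1..4}$ of the regularized system on each slab $\Omega_T$. The whole effort lies in deriving a priori estimates \emph{uniform in $\eps$}, so that one can pass to the limit $\eps \to 0$.

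The decisive observation is that the sums $S_1^\eps := a_1^\eps + a_2^\eps$ and $S_2^\eps := a_2^\eps + a_3^\eps$ satisfy
\[
  \partial_t S_j^\eps - \Delta_x(M_j^\eps\, S_j^\eps) = 0, \qquad j = 1,2,
\]
for coefficients $M_j^\eps$ sandwiched between two \emph{fixed}, $\eps$-independent constants in $\{d_1, d_2, d_3\}$. Applying Proposition \ref{propun} in $N = 2$ exactly as in the proof of Proposition \ref{propdeux}---choosing $p'$ close enough to $2$ for condition \eqref{eq:const-small-q} to hold automatically, since $C_{m,2} \leq 1/m$---yields $\eps$-uniform $L^p(\Omega_T)$ bounds on $S_1^\eps$ and $S_2^\eps$, and therefore on $a_1^\eps, a_2^\eps, a_3^\eps$, for every $p \in ]2, +\infty[$.

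The remaining task is to bound $a_4^\eps$ in $L^\infty(\Omega_T)$ uniformly in $\eps$. The sign $a_2^\eps a_4^\eps \geq 0$ gives $\partial_t a_4^\eps - \eps \Delta_x a_4^\eps \leq a_1^\eps a_3^\eps$, while the previous step provides $\eps$-uniform $L^q(\Omega_T)$ control of $a_1^\eps a_3^\eps$ for every $q < \infty$, in particular strictly above the parabolic scaling threshold $(N+2)/2 = 2$ available thanks to $N = 2$. A De Giorgi / Moser iteration applied to this one-sided parabolic inequality---combined with the fact that the Neumann heat semigroup is an $L^\infty$-contraction, so that the constants do not degenerate as $\eps \to 0$---should produce an $L^\infty(\Omega_T)$ bound on $a_4^\eps$ uniform in $\eps$. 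Once this is secured, the right-hand sides of the reaction--diffusion equations for $a_1^\eps, a_2^\eps, a_3^\eps$ are bounded in every $L^p(\Omega_T)$ uniformly in $\eps$, and classical parabolic regularity with the fixed positive diffusion constants $d_1, d_2, d_3$ upgrades the $L^p$ bounds to uniform $L^\infty(\Omega_T)$ bounds on these three species.

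Passage to the limit $\eps \to 0$ is then routine: Aubin--Lions provides strong $L^p$ compactness of $a_1^\eps, a_2^\eps, a_3^\eps$ (whose equations have fixed nondegenerate diffusion) and weak-$*$ compactness of $a_4^\eps$; the nonlinear products in the reaction term are identified via the strong convergence of $a_1^\eps, a_2^\eps, a_3^\eps$, and no boundary condition on $a_4$ survives in the limit because the $a_4$-equation becomes an ODE parametrized by $x$. The main obstacle is exactly the $\eps$-uniformity of the $L^\infty$ bound on $a_4^\eps$: the textbook $L^q \to L^\infty$ maximal-regularity tools for $\partial_t - \eps\Delta_x$ come with constants that blow up as $\eps \to 0$, so the argument must be designed to exploit the diffusion $\eps\Delta_x$ only through the $L^\infty$-contractivity it provides under the Neumann boundary condition, effectively treating $a_4^\eps$ as a contractively-regularized version of the underlying $x$-parametric ODE driven by the uniformly $L^q$-bounded source $a_1^\eps a_3^\eps$.
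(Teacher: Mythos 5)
Your choice of the two sums $a_1+a_2$ and $a_2+a_3$ is exactly the paper's starting point, and the vanishing-viscosity regularization of $d_4$ is a legitimate alternative to working directly with the weak solutions of \cite{DF07}. But there are two genuine gaps. First, Proposition \ref{propun} combined with the interpolation argument of Lemma \ref{lem32} yields an ($\eps$-uniform) bound on $a_1,a_2,a_3$ in $L^{2+\delta}(\Omega_T)$ for \emph{some} small $\delta>0$ only, not ``for every $p\in]2,+\infty[$'': condition \eqref{eq:const-small-q} is guaranteed only for $p'$ close to $2$, and for large $p$ (i.e.\ $p'$ close to $1$) it may fail. Climbing from $L^{2+\delta}$ to all $p<\infty$ is a substantial part of the paper's proof: one feeds $a_1a_3\in L^{1+\delta/2}(\Omega_T)$ into the $a_2$-equation (whose right-hand side is bounded above by $a_1a_3$, so that $a_4$ does not enter), gains integrability on $a_2$ via Lemma \ref{lem33}, and then transfers it to $a_1$ and $a_3$ through Pierre's comparison/duality lemma applied to \eqref{a1a2}; this loop must be iterated finitely many times. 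You cannot skip this bootstrap, because the individual equations for $a_1$ and $a_3$ have right-hand sides bounded by $a_2a_4$, which is not under control at that stage.

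Second, and more seriously, the step ``a De Giorgi/Moser iteration gives an $\eps$-uniform $L^\infty(\Omega_T)$ bound on $a_4^\eps$ from $L^q$ control of $a_1^\eps a_3^\eps$'' is false. There is no $\eps$-uniform $L^q\to L^\infty$ regularization for $\pa_t-\eps\Delta_x$: with a source $f(t,x)=g(x)\in L^q\setminus L^\infty$ and zero initial datum, the solution converges as $\eps\to0$ to $t\,g(x)\notin L^\infty(\Omega_T)$. The $L^\infty$-contractivity of the Neumann semigroup only preserves norms; it cannot upgrade an $L^q_x$ source to an $L^\infty_x$ output. In the limit the $a_4$-equation is an ODE in $t$, so from $a_1a_3\in L^q(\Omega_T)$ for all $q<\infty$ one can only conclude $a_4\in L^\infty(0,T;L^q(\Omega))$ for all $q<\infty$ --- which is precisely what the paper extracts by integrating \eqref{eqa4} in time. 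The correct order is therefore: obtain $a_4\in L^{\infty-0}(\Omega_T)$ by time integration; use this together with $a_2\in L^\infty$ to bound the right-hand sides $a_2a_4$ of the $a_1$- and $a_3$-equations in $L^q$ for some $q>(N+2)/2=2$; deduce $a_1,a_3\in L^\infty(\Omega_T)$ as in Lemma \ref{lem37}; and only then return to \eqref{eqa4} to conclude $a_4\in L^\infty(\Omega_T)$. As written, your plan tries to close the $L^\infty$ bound on $a_4$ before the $L^\infty$ bounds on $a_1,a_3$, and that step cannot be made to work.
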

\bigskip

\begin{rem}
The assumption that the initial data lie in $L^{\infty}$ enables to give a simple formulation
of the Propositions above, but it is not optimal (if one is only interested in the bounds 
of the solutions after a given positive time $t_0>0$). For example, in the case of Proposition
\ref{propdeux}, it is easy to see that if the initial data lie in $L^p(\Omega)$ for some
$p>2$, then the conclusion remains true with the time interval $[0, +\infty[$ changed into $[t_0, +\infty[$
(for any $t_0>0$) for the bounds. A more careful analysis (cf. Remark \ref{rein}) shows that the assumption on the initial data
can even be relaxed to $L^p(\Omega)$ for some
$p>1$.
\end{rem}
\medskip

\begin{rem}
Classical bootstrap arguments also show that all the above weak solutions, once they belong to 
$L^{\infty}(Q_T)$, are in fact strong and smooth provided that the 
set $\Omega$ has a smooth enough boundary (and provided that the initial datum is also smooth enough, if one wishes to get smoothness even at point $t=0$).
Moreover in this case, those solutions are unique (in the set of smooth enough solutions)
and (in the case of Propositions \ref{propdeux} and \ref{proptrois}) converge towards equilibrium exponentially fast (with explicitly computable constants) in any $H^p$ norm
($p\in \N$), thanks to interpolation arguments similar to those exposed in \cite{DF08}.
\end{rem}
\medskip
\begin{rem}
  The smoothness assumption $C^{2+\alpha}$, $\alpha>0$ on the boundary
  $\partial\Omega$ is likely not optimal. One could conjecture that
  the above results hold true also for $C^{1+\alpha}$ boundaries or
  even Lipschitz boundaries. However, for more general boundaries, we
  lack, for instance, a reference which states explicitly the
  time-independence of the constant $C_{m,q}$ in \eqref{huit}.
\end{rem}
\medskip

The paper is organized as follows: In Section \ref{secdeux}, we present the Proof of Proposition
\ref{propun}. Section \ref{sectrois} is devoted to the applications of Proposition \ref{propun}
to the ``four species''
system \eqref{trois}, first in dimension~2 (Proof of Proposition \ref{propdeux}) and then in any dimension (Proof of Proposition \ref{proptrois}). 
Finally, we present in Section~\ref{secquatre} the extensions
to more general reaction-diffusion systems (Proof of Proposition~\ref{propquatre}) and to the case when one diffusion rate is $0$ (Proof of Proposition \ref{propcinq}).

\section{An estimate for singular parabolic problems}
\label{secdeux}

We first recall a well-known result for the heat equation, which
ensures that for $m>0$, $p \in ]1, 2[$ the constants $C_{m,p}$ stated
in eq. (\ref{huit}) are well-defined, finite and time-independent.

\begin{lem}
  \label{lem21}
  Let $\Omega$ be a bounded domain of $\R^N$ with smooth
  (e.g. $C^{2+\alpha}$, $\alpha>0$) boundary $\partial\Omega$, $m>0$,
  and $p \in ]1,2]$.  Then, there exists a constant $C_{m,p} > 0$
  depending on $m$, $p$, the domain $\Omega$ and the space dimension
  $N$, but not on $T$, such that the solution $v:[0,T] \times \Omega
  \to \R$ of the backward heat equation with homogeneous Neumann
  boundary conditions (\ref{quinze}) satisfies
  \begin{equation}
    \label{eq:heat-estimate-p}
    \|\Delta_x v\|_{L^p(\Omega_T)}
    \leq
    C_{m, p}\, \|f\|_{L^p(\Omega_T)},
  \end{equation}
  where $f \in L^p(\Omega_T)$ is the r.h.s. of (\ref{quinze}). 
  Moreover, $C_{m,2} \le 1/m$.
\end{lem}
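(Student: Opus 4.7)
Proof plan for Lemma~\ref{lem21}:

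The plan is to treat the endpoint $p=2$ by a direct energy computation and then handle $p\in\,]1,2[$ by invoking classical parabolic maximal $L^p$-regularity for the Neumann Laplacian, paying particular attention to the $T$-independence of the constant. First I would reduce to a forward problem: set $w(s,x):=v(T-s,x)$ and $g(s,x):=-f(T-s,x)$, so that $\partial_s w - m\Delta_x w = g$ with zero initial datum and homogeneous Neumann conditions on $[0,T]\times\Omega$. Since $\|\Delta_x v\|_{L^p(\Omega_T)}=\|\Delta_x w\|_{L^p(\Omega_T)}$ and $\|f\|_{L^p(\Omega_T)}=\|g\|_{L^p(\Omega_T)}$, it suffices to estimate $\Delta_x w$.

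For $p=2$, I would multiply $\partial_s w - m\Delta_x w = g$ by $-\Delta_x w$ and integrate over $[0,T]\times\Omega$. Using the Neumann boundary condition, integration by parts gives
\begin{equation*}
-\int_{\Omega_T}\partial_s w\,\Delta_x w\,dx\,ds = \tfrac12\int_\Omega|\nabla_x w(T,x)|^2\,dx \ge 0,
\end{equation*}
so that
\begin{equation*}
m\,\|\Delta_x w\|_{L^2(\Omega_T)}^2 \le -\int_{\Omega_T} g\,\Delta_x w\,dx\,ds \le \|g\|_{L^2(\Omega_T)}\,\|\Delta_x w\|_{L^2(\Omega_T)},
\end{equation*}
which yields $\|\Delta_x w\|_{L^2(\Omega_T)}\le m^{-1}\|g\|_{L^2(\Omega_T)}$, i.e.\ $C_{m,2}\le 1/m$.

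For $p\in\,]1,2[$ I would quote classical maximal $L^p$-regularity for the heat equation on a $C^{2+\alpha}$ bounded domain with homogeneous Neumann boundary conditions: the Neumann Laplacian generates a bounded analytic semigroup on $L^p(\Omega)$, and by Lamberton's/Dore--Venni's theorem (or the pseudo-differential parabolic theory of Ladyzhenskaya--Solonnikov--Ural'tseva), the operator $\partial_s - m\Delta_x$ with zero initial data is an isomorphism from the maximal regularity space onto $L^p(\Omega_T)$. Concretely, there exists $C=C(m,p,\Omega,N)$ such that $\|\partial_s w\|_{L^p(\Omega_T)}+\|\Delta_x w\|_{L^p(\Omega_T)}\le C\,\|g\|_{L^p(\Omega_T)}$.

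The one nontrivial point (and the part I would expect to be the main obstacle) is verifying that $C$ can indeed be chosen independently of $T$. I would argue this by first establishing the corresponding estimate on the half-line $(0,+\infty)$ with zero initial data: the constant there depends only on the semigroup bound on $L^p(\Omega)$ and on $m$, $p$, $\Omega$, $N$, and \emph{not} on time, because the maximal regularity norm $\|\partial_s w\|_{L^p((0,\infty);L^p(\Omega))}+\|\Delta_x w\|_{L^p((0,\infty);L^p(\Omega))}\lesssim \|g\|_{L^p((0,\infty);L^p(\Omega))}$ is formulated on the whole half-line. Given any $T>0$, I would then extend $g$ by zero for $s>T$, solve on $(0,+\infty)$, and restrict to $(0,T)$: by uniqueness the restriction coincides with $w$, so the resulting estimate on $(0,T)$ inherits the same $T$-independent constant. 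This gives the existence of the time-independent constant $C_{m,p}$ claimed in \eqref{eq:heat-estimate-p}.
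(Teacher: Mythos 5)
Your proposal is correct and follows essentially the same route as the paper: reduce to the forward problem with zero initial data, obtain $C_{m,2}\le 1/m$ by testing with $-\Delta_x w$, and invoke Lamberton's maximal $L^p$-regularity result (which rests on the contraction property of the Neumann heat semigroup on all $L^p$) for $p\in\,]1,2[$. Your extension-by-zero argument on the half-line is a nice explicit justification of the $T$-independence, which the paper simply takes from Lamberton's statement.
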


\begin{proof}[Proof of Lemma \ref{lem21}] After introducing the time
  variable $\tau= T-t\in[0,T]$, the backward heat equation
  \eqref{quinze} with Neumann boundary conditions and zero end data
  transforms into the forward heat equation with zero initial data:
  \begin{equation*} 
    \left\{
      \begin{aligned}
        &\partial_{\tau} v - m \,\Delta_x v = - f &&\quad \text{ on } \quad \Omega_T,
        \\
        &v(0,x) = 0 &&\quad \text{ for } \quad x \in \Omega,
        \\
        &\nabla_x v \cdot \nu (x) = 0 &&\quad \text{ on } \quad  [0,T]
        \times \partial \Omega.
      \end{aligned}
    \right.
  \end{equation*}
  Moreover, the semigroup of the forward heat equation with
  homogeneous Neumann boundary condition satisfies the contraction
  property $\|e^{t \Delta_x} v(0,\cdot)\|_p\le\|v(0,\cdot)\|_p$ for
  all $p\in[1,\infty]$ and for all $t\ge0$. Thus, the statement of the
  Lemma follows from \cite{JFA}, where it is explicitly stated that
  $C_{m,p}$ can be taken as time-independent. In particular, the
  Hilbert space case $p=2$ allows explicit calculations by testing the
  above forward heat equation with $-\Delta_x$, which shows that
  $C_{m,2} \le\frac{1}{m}$, see eq. \eqref{eq:heat-estimate-p=2}
  below.
\end{proof}
 
We now consider the corresponding problem with variable diffusion rate
and obtain similar estimates:

\begin{lem}
  \label{lem23}
Let $\Omega$ be a bounded domain of $\R^N$ with smooth (e.g. $C^{2+\alpha}$, $\alpha>0$) boundary $\partial\Omega$, $T > 0$, $p \in ]1,2]$, 
  and $M := M(t,x)$ be  bounded above and below; i.-e.
 for some $0<a\le b $, 
  \begin{equation}
    0 < a \leq M(t,x) \leq b < +\infty
    \quad \text{ for } (t,x) \in \Omega_T.
  \end{equation}
  We assume (using the notation of Lemma \ref{lem21}) that
\begin{equation}
    \label{eq:const-small}
    C_{\frac{a+b}2,p}\, \,\frac{b-a}{2} < 1.
  \end{equation}

  We consider $f\in L^p(\Omega_T)$, and a solution $v$ of the parabolic equation with
  variable diffusion rate given by $M$:
  \begin{equation}
    \label{eq:heat-variable}
    \left\{
      \begin{aligned}
        &\partial_t v + M \,\Delta_x v = f &&\quad \text{ on } \quad \Omega_T,
        \\
        &v(T,x) = 0 &&\quad \text{ for } \quad x \in \Omega,
        \\
        &\nabla_x v \cdot \nu(x) = 0 &&\quad \text{ on } \quad [0,T]
        \times \partial \Omega.
      \end{aligned}
    \right.
  \end{equation}
  Then,
  \begin{equation}
    \label{eq:heat-estimate-variable}
    \|\Delta_x v\|_{L^p(\Omega_T)}
    \leq
    D_{a,b,p}\,
    \| f \|_{L^p(\Omega_T)},
  \end{equation}
and 
 \begin{equation}
    \label{eq:v0-bound}
    \| v(0,\cdot) \|_{L^p(\Omega)}
    \leq (1 + b \,D_{a,b,p})\, T^{1/p'} \,\|f\|_{L^p(\Omega_T)},
  \end{equation}
  where $D_{a,b,p}$ is given by (\ref{dixsept}), i.e.
  $$    
  D_{a,b,p} :=
  \frac{C_{\frac{a+b}2,p}}{1 - C_{\frac{a+b}2,p} \frac{b-a}{2}},
  $$
\end{lem}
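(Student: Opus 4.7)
The plan is to use a Meyers-type perturbation argument that reduces the variable-coefficient estimate to the constant-coefficient one from Lemma \ref{lem21}. The key observation is that $M(t,x)$ differs from its ``midpoint'' $\frac{a+b}{2}$ by at most $\frac{b-a}{2}$ in sup norm, so we can write
\begin{equation*}
\partial_t v + \frac{a+b}{2}\Delta_x v = f - \Bigl(M - \frac{a+b}{2}\Bigr)\Delta_x v,
\end{equation*}
with the same zero terminal data and Neumann boundary condition. Lemma \ref{lem21} applied with $m = \frac{a+b}{2}$ then gives
\begin{equation*}
\|\Delta_x v\|_{L^p(\Omega_T)} \leq C_{\frac{a+b}{2},p}\,\Bigl\|f - \Bigl(M - \tfrac{a+b}{2}\Bigr)\Delta_x v\Bigr\|_{L^p(\Omega_T)} \leq C_{\frac{a+b}{2},p}\,\|f\|_{L^p(\Omega_T)} + C_{\frac{a+b}{2},p}\,\tfrac{b-a}{2}\|\Delta_x v\|_{L^p(\Omega_T)}.
\end{equation*}
Under the smallness assumption \eqref{eq:const-small}, the second term can be absorbed into the left-hand side, yielding precisely the estimate \eqref{eq:heat-estimate-variable} with constant $D_{a,b,p}$ as defined in \eqref{dixsept}. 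This is the standard ``perturb around the average diffusion'' trick, and \eqref{eq:const-small} is exactly what makes the absorption legitimate.

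For the second bound \eqref{eq:v0-bound}, I would integrate the PDE in time: since $v(T,\cdot)=0$,
\begin{equation*}
v(0,x) = -\int_0^T \partial_t v(s,x)\,ds = \int_0^T \bigl(M(s,x)\,\Delta_x v(s,x) - f(s,x)\bigr)\,ds.
\end{equation*}
Applying H\"older's inequality in the time variable with exponents $p'$ and $p$ pointwise in $x$, then taking the $L^p(\Omega)$ norm and using Fubini, gives
\begin{equation*}
\|v(0,\cdot)\|_{L^p(\Omega)} \leq T^{1/p'}\,\|M\,\Delta_x v - f\|_{L^p(\Omega_T)} \leq T^{1/p'}\bigl(b\,\|\Delta_x v\|_{L^p(\Omega_T)} + \|f\|_{L^p(\Omega_T)}\bigr),
\end{equation*}
and plugging in the first estimate yields \eqref{eq:v0-bound}.

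I do not anticipate a serious obstacle: the whole argument hinges on Lemma \ref{lem21} producing a time-\emph{independent} constant $C_{m,p}$, which is why the absorption step works on arbitrary time intervals without degradation. The only subtle point is the justification that $v$ has enough regularity for the manipulations — in particular that $\Delta_x v \in L^p(\Omega_T)$ a priori, so that the inequality above is not vacuous. This can be handled either by working with the equation formally and then appealing to a density/approximation argument (e.g.\ truncating $f$ and mollifying $M$ to get a classical solution, then passing to the limit), or by noting that the fixed-point formulation $w \mapsto \mathcal{T}(w)$, where $\mathcal{T}(w)$ solves $\partial_t \mathcal{T}(w) + \frac{a+b}{2}\Delta_x \mathcal{T}(w) = f - (M-\frac{a+b}{2})\,\Delta_x w$ backwards in time, is a contraction on $L^p(\Omega_T)$ (with the norm $\|\Delta_x \cdot\|_{L^p}$) precisely under \eqref{eq:const-small}, giving existence and the estimate simultaneously.
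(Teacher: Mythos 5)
Your proposal is correct and follows essentially the same route as the paper: the identical perturbation $\partial_t v + \frac{a+b}{2}\Delta_x v = f + (\frac{a+b}{2} - M)\Delta_x v$ with absorption under \eqref{eq:const-small}, followed by expressing $v(0,\cdot)$ as a time integral of $\partial_t v = f - M\Delta_x v$ and applying H\"older in time. Your closing remark on justifying the a priori regularity of $\Delta_x v$ (via approximation or a contraction fixed point) is a reasonable addition that the paper leaves implicit.
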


\begin{proof}[Proof of Lemma \ref{lem23}]
  In order to find estimates analogous to \eqref{eq:heat-estimate-p}
for our variable coefficients parabolic equation, 
we take $m := (a + b)/2$ and rewrite \eqref{eq:heat-variable} as the perturbative problem
  \begin{equation}
    \label{eq:heat-variable-2}
    \partial_t v +  m \,\Delta_x v =
    \left(m - M \right) \, \Delta_x v + f.
  \end{equation}
  Then from \eqref{eq:heat-estimate-p}, we get
  \begin{align}
    \|\Delta_x v\|_{L^p(\Omega_T)} &\leq C_{m,p}\, \left\| \left(m - M
      \right) \Delta_x v + f \right\|_{L^p(\Omega_T)}\nonumber 
    \\
    \label{eq:variable-estimate-1}
    &\leq C_{m,p}\, \left( \frac{b-a}{2} \left\| \Delta_x v
      \right\|_{L^p(\Omega_T)} + \left\| f \right\|_{L^p(\Omega_T)}
    \right).
  \end{align}
  Provided that \eqref{eq:const-small} holds, this directly implies
  \eqref{eq:heat-estimate-variable}.
  \par
  Using now estimate \eqref{eq:heat-estimate-variable} in eq. \eqref{eq:heat-variable},
  we get
  \begin{equation}
    \label{eq:dt-Lp-estimate}
    \|\partial_t v\|_{L^p(\Omega_T)}
    \leq
    (1 + b \,D_{a,b,p})\, \|f\|_{L^p(\Omega_T)}.
  \end{equation}
  Taking into account that $v(T,x) = 0$ for $x \in \Omega$,
  \begin{multline}\label{muli}
    \|v(0,\cdot)\|_{L^p(\Omega)}
    =
    \Big\| \int_0^T \partial_t v(t,\cdot) \,dt \Big\|_{L^p(\Omega)}
    \\
    \leq \int_0^T \| \partial_t v(t,\cdot)\|_{L^p(\Omega)} \,dt
    \leq \|\partial_t v(t,\cdot)\|_{L^p(\Omega_T)} T^{1/p'},    
  \end{multline}
  using H\"older's inequality in the last step. Together with
  \eqref{eq:dt-Lp-estimate}, this proves Lemma~\ref{lem23}.
\end{proof}

From the previous Lemmas, we obtain by duality Proposition \ref{propun}.

\begin{proof}[Proof of Proposition \ref{propun}]
Take any $f \in L^{p'}(\Omega_T)$, and consider $v$ the solution of the backward heat equation \eqref{eq:heat-variable}. Testing \eqref{eq:heat-variable} with the solution $u$ of \eqref{eq:heat-variable-forward}, one easily checks that
  \begin{equation*}
    \frac{d}{dt} \left(\, \int_\Omega u(t,x)\, v(t,x) \,dx \right)
    = \int_\Omega u(t,x)\, f(t,x) \,dx,
  \end{equation*}
  which implies that
  \begin{multline*}
    \bigg| \int_{\Omega_T} u f \bigg| = \bigg| - \int_\Omega u_0(x) v(0,x)\, dx \bigg|
    \leq
    \|u_0\|_{L^p(\Omega)} \|v(0,\cdot)\|_{L^{p'}(\Omega)}
    \\
    \leq
    (1 + b \,D_{a,b,p'})\, T^{1/p} \,
    \|u_0\|_{L^p(\Omega)} \,
     \|f\|_{L^{p'}(\Omega_T)},
  \end{multline*}
  where we have used \eqref{eq:v0-bound} for the last inequality, with $p$ replaced by $p'$.
 As this holds for an arbitrary $f \in L^{p'}(\Omega_T)$, we conclude
  that \eqref{eq:heat-estimate-forward} holds, and Proposition~\ref{propun} is proven.
\end{proof}

\begin{rem} \label{rein}
In fact, one can observe that estimate (\ref{muli}) is not optimal. 
 One can show using the properties of the heat equation that
$$\|v(0,\cdot)\|_{L^r(\Omega)} \le C_T\, \left( \|\pa_t v\|_{L^p(\Omega_T)} +
\|\Delta_x v\|_{L^p(\Omega_T)} \right), $$
for any $r < \frac{p\,N}{N + 2 - 2p}$, and any $T>0$ [$r$ can be taken as large as wanted if $2p > N +2$]. 
\medskip

As a consequence, in the proof by duality of Proposition \ref{propun},
the norm $\|u_0\|_{L^p(\Omega)}$ can be replaced by the weaker norm
$\|u_0\|_{L^q(\Omega)}$, for any $q>p/(1 + 2/N)$. This improvement
allows one to consider more singular initial data in the
reaction-diffusion problems studied in the sequel.
\end{rem}

\section{The ``four species'' equation}
\label{sectrois}

We now turn to the application of Proposition \ref{propun} to the
``four species'' system~(\ref{trois}).

\subsection{A general \emph{a priori} estimate}
\label{sec:4species-estimates}

We begin with the following {\sl{a priori}} estimate for the ``four species'' equation,
 which is a direct consequence of Proposition \ref{propun}:

\begin{lem}
  \label{lem31}
  Let $\Omega$ be a bounded domain of $\R^N$ with smooth
  ($C^{2+\alpha}$, $\alpha>0$) boundary $\partial\Omega$ and $T > 0$.
  Consider a weak solution $\{a_i\}_{i=1,\dots,4}$ to system
  (\ref{trois}) with homogeneous Neumann boundary conditions
  (\ref{CN}) on $[0,T]$ and initial condition $a_{i0 }\in L^p(\Omega)$
  ($i=1..4$) for some $p > 2$, and diffusion rates $d_i>0$ ($i=1..4$).
  We denote
  \begin{equation}
    \label{eq:def-a-b}
    a := \min_{i=1,\dots,4} \{d_i\},
    \quad
    b := \max_{i=1,\dots,4} \{d_i\},
  \end{equation}
 and assume that 
  \begin{equation}
    \label{eq:const-small-q-2}
    C_{\frac{a+b}2,p'}\, \frac{b-a}{2} < 1 .
  \end{equation}
   \par
Then
  \begin{equation}
    \label{eq:heat-estimate-forward-2}
    \|a_i\|_{L^p(\Omega_T)}
    \leq
    (1 + b \,D_{a,b,p'})\, T^{1/p} \,
    \bigg\|\sum_{j=1}^4 a_{j0}\bigg\|_{L^p(\Omega)},
  \end{equation}
  for  $i = 1..4$.
\par
Here, $C_{m,p}$ and $D_{a,b,q}$ are the constants defined in Proposition \ref{propun}.
\end{lem}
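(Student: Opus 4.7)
The plan is to reduce the four-species system to a single scalar equation of the form treated in Proposition~\ref{propun}, by summing the four equations so that the reaction terms cancel.

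More precisely, I would first set $u := a_1 + a_2 + a_3 + a_4$ and $w := d_1 a_1 + d_2 a_2 + d_3 a_3 + d_4 a_4$. Adding the four equations in \eqref{trois} the right-hand sides cancel (since $(+1)+(-1)+(+1)+(-1)=0$ is the pattern of signs), yielding
\begin{equation*}
\partial_t u - \Delta_x w = 0 \qquad \text{on } \Omega_T,
\end{equation*}
with homogeneous Neumann boundary conditions (obtained by summing the individual Neumann conditions) and initial datum $u_0 = \sum_{j=1}^4 a_{j0}$. The key observation is that one may write $w = M\,u$ with
\begin{equation*}
M(t,x) := \frac{d_1 a_1 + d_2 a_2 + d_3 a_3 + d_4 a_4}{a_1+a_2+a_3+a_4}
\end{equation*}
wherever $u(t,x) > 0$, extending $M$ to be, say, $(a+b)/2$ on the (negligible) set where $u=0$. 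Because the $a_i$ are nonnegative, $M$ is a convex combination of the $d_i$, so $a \leq M(t,x) \leq b$ on $\Omega_T$, verifying the bound \eqref{eq:M-bounds}.

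Next I would invoke Proposition~\ref{propun} applied to this scalar equation $\partial_t u - \Delta_x(M u) = 0$. The smallness assumption \eqref{eq:const-small-q} is precisely our hypothesis \eqref{eq:const-small-q-2}, so the proposition yields
\begin{equation*}
\|u\|_{L^p(\Omega_T)} \leq (1 + b\,D_{a,b,p'})\,T^{1/p}\,\|u_0\|_{L^p(\Omega)}.
\end{equation*}

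To conclude, I would use the nonnegativity of each $a_i$: since $0 \leq a_i \leq u$ pointwise, the trivial bound $\|a_i\|_{L^p(\Omega_T)} \leq \|u\|_{L^p(\Omega_T)}$ combined with $\|u_0\|_{L^p(\Omega)} = \|\sum_{j=1}^4 a_{j0}\|_{L^p(\Omega)}$ delivers \eqref{eq:heat-estimate-forward-2}. There is no real obstacle here; the only point requiring a brief comment is the justification that $M$ is well-defined and lies in $[a,b]$ almost everywhere, and that $u$ is a weak solution in the sense required by Proposition~\ref{propun} (which is straightforward as a linear combination of the $a_i$).
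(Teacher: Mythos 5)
Your proposal is correct and follows essentially the same route as the paper: sum the four equations so the reaction terms cancel, write the result as $\partial_t u - \Delta_x(Mu)=0$ with $M$ the convex combination of the $d_i$ (hence $a\le M\le b$), apply Proposition~\ref{propun}, and conclude via $0\le a_i\le u$. Your explicit handling of the set where $u=0$ and of the final pointwise comparison $a_i\le u$ are minor details the paper leaves implicit, not a different argument.
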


\begin{proof}[Proof of Lemma \ref{lem31}]
  We call $u := \sum_{i=1}^4 a_i$ the total local mass of the
  system. Then $u$ satisfies the forward heat equation
  (\ref{eq:heat-variable-forward}) with
  \begin{equation}
    \label{eq:def-M}
    M(t,x) := \frac{\sum_{i=1}^4 d_i\, a_i(t,x)}{\sum_{i=1}^4 a_i(t,x)}\, .
  \end{equation}
  We observe that the bound (\ref{eq:M-bounds}) holds. Moreover,
  assumption \eqref{eq:const-small-q-2} is identical to
  assumption~\eqref{eq:const-small-q}.  As a consequence, thanks to
  Proposition \ref{propun}, we end up with
  estimate~(\ref{eq:heat-estimate-forward-2}).
\end{proof}

Next we turn to a Lemma which is specially devised for the
two-dimensional case. Note that it does not depend on the size of
$b-a$: \bigskip

\begin{lem}
  \label{lem32}
  Let $\Omega$ be a bounded domain of $\R^2$ with smooth
  ($C^{2+\alpha}$) boundary $\partial\Omega$, $T>0$ and diffusion
  rates $d_i>0$ ($i=1..4$). We still use the notation
  (\ref{eq:def-a-b}).  \smallskip
 
  Then, one can find two constants $K_1>0$ and $p>2$ depending on
  $a,b$ and $\Omega$, such that any weak solution $(a_i)_{i=1..4}$ of
  system (\ref{trois}) with the homogenenous Neumann boundary
  conditions (\ref{CN}) and initial conditions
  in $L^{p}(\Omega)$ satisfies
  \begin{equation}\label{ne}
    \|a_i\|_{L^{p}(\Omega_T)} \le K_1\, (1 + T)^{1/2} \,   
    \bigg\|\sum_{i=1}^4 a_{i0}\bigg\|_{L^p(\Omega)}. 
  \end{equation}
\end{lem}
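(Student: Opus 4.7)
The goal is to apply Lemma \ref{lem31} to $u := \sum_{i=1}^4 a_i$ with an exponent $p > 2$ chosen slightly greater than $2$, and then to absorb the prefactor $T^{1/p}$ into $(1+T)^{1/2}$. Since each $a_i \ge 0$, we have $a_i \le u$ pointwise, so $\|a_i\|_{L^p(\Omega_T)} \le \|u\|_{L^p(\Omega_T)}$, and it suffices to bound the latter. The novelty compared to Lemma \ref{lem31} is the complete absence of any smallness assumption on $\delta = b-a$.

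\textbf{Choice of $p$ and verification of the smallness condition.} The hypothesis \eqref{eq:const-small-q-2} requires
$$
C_{\frac{a+b}{2}, p'} \cdot \frac{b-a}{2} < 1.
$$
At the Hilbertian endpoint $p = p' = 2$, the explicit bound $C_{\frac{a+b}{2}, 2} \le 2/(a+b)$ from Lemma \ref{lem21} yields the unconditional strict inequality
$$
C_{\frac{a+b}{2}, 2} \cdot \frac{b-a}{2} \le \frac{b-a}{a+b} < 1.
$$
The plan is then to open a neighborhood of $p' = 2$ where the inequality still holds. For this, we invoke the continuity of $q \mapsto C_{\frac{a+b}{2}, q}$ on ${]1,2]}$: since the operator $f \mapsto \Delta_x v$ (where $v$ solves the backward heat equation \eqref{quinze}) is bounded on $L^q(\Omega_T)$ for every $q \in {]1,\infty[}$ by classical singular-integral theory (as used in Lemma \ref{lem21}), Riesz--Thorin interpolation makes $\log C_{\frac{a+b}{2},q}$ a convex function of $1/q$ and in particular upper semicontinuous. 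Hence there exists $p' \in {]1, 2[}$, depending only on $a$, $b$, and $\Omega$, for which the strict smallness condition persists. Its H\"older conjugate $p > 2$ is the desired exponent.

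\textbf{Conclusion.} With this choice of $p$, Lemma \ref{lem31} gives
$$
\|a_i\|_{L^p(\Omega_T)} \le (1 + b\, D_{a,b,p'})\, T^{1/p}\, \bigg\|\sum_{j=1}^4 a_{j0}\bigg\|_{L^p(\Omega)}.
$$
Since $p > 2$ implies $T^{1/p} \le (1+T)^{1/2}$ for all $T \ge 0$ (split into $T \le 1$ and $T \ge 1$), setting $K_1 := 1 + b\, D_{a,b,p'}$ proves \eqref{ne}.

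\textbf{Main obstacle.} The crux is the continuity of $C_{m,q}$ at $q = 2$, a \emph{Meyers-type} statement which together with the explicit bound $C_{m,2} \le 1/m$ forces strict inequality in the smallness condition \emph{regardless} of the magnitude of $\delta$. This is precisely what allows one to remove the constraint on $b-a$ that must be imposed in Propositions \ref{proptrois} and \ref{propquatre}; the trade-off is that the gain above $p=2$ shrinks as $\delta/(a+b) \to 1$, so that the resulting $p$ cannot be quantified in closed form.
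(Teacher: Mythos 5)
Your proof is correct and follows essentially the same route as the paper: both exploit the unconditional strict inequality $C_{\frac{a+b}{2},2}\,\frac{b-a}{2}\le\frac{b-a}{a+b}<1$ at the Hilbertian endpoint $p'=2$ and propagate it to some $p'<2$ by interpolation (the paper interpolates explicitly between $q=3/2$ and $q=2$, which is just the concrete form of the Riesz--Thorin/log-convexity argument you invoke), then conclude via Lemma~\ref{lem31} and $T^{1/p}\le(1+T)^{1/2}$. The only difference is that the paper's explicit interpolation yields a quantitative admissible range for $2-p'$ in terms of $C_{\frac{a+b}{2},3/2}$, so your closing remark that $p$ cannot be quantified is slightly too pessimistic.
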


\begin{proof}[Proof of Lemma \ref{lem32}]
At first we shall deduce that for any $m>0$ and for  $3/2\le r \le 2$ the following esimtate holds:
\begin{equation}\label{neuf}
C_{m,r} \le m^{- \frac{4}{r}\left(r-\frac{3}{2}\right)}\, (C_{m,3/2})^{\frac{3}{r}\left(2-r\right)} .
\end{equation}
Indeed, multiplying \eqref{quinze} by $\Delta_x v$ and integrating on $\Omega_T$, we easily obtain
  \begin{equation}
    \label{eq:heat-estimate-p=2}
    \|\Delta_x v\|_{L^2(\Omega_T)}
    \leq
    \frac{1}{m} \, \|f\|_{L^2(\Omega_T)},
  \end{equation}
so that  $C_{m,2} \le m^{-1}$ for $m > 0$. 

Then, an interpolation with \eqref{eq:heat-estimate-p} for
$p=\frac{3}{2}$ gives for all $r \in [3/2,2]$
  \begin{equation}
    \label{eq:heat-estimate-interpolation}
    \|\Delta_x v\|_{L^r(\Omega_T)}
    \leq m^{-\theta}\,  (C_{m,3/2})^{1-\theta} \, \|f\|_{L^r(\Omega_T)},
  \end{equation}
  with the interpolation exponent $\theta \in [0,1]$ satisfying
$$  
\frac{\theta}{2} + \frac{1-\theta}{3/2} = \frac{1}{r}
\quad\Rightarrow\quad \theta = \frac{4r-6}{r},
$$
which yields (\ref{neuf}).
\par
Using now (\ref{neuf}) for $r=p'$ and $m = \frac{a+b}2$, we see that
eq. (\ref{eq:const-small-q-2}) is satisfied as 
soon as the following inequality is satisfied:
$$
-\theta(p')\ln(m)+(1-\theta(p'))\ln(C_{m,3/2})+\ln\left(\frac{b-a}{2}\right)<0,
$$
which yields with $1-\theta(p')=\frac{3}{p'}(2-p')$ 
\begin{equation}\label{dfg}
\frac{3}{p'}\, (2-p')\,\left( \log(C_{\frac{a+b}2, 3/2}) +  \log \left(\frac{a+b}2 \right)\right) < \log\left(\frac{a+b}{b-a}\right). 
\end{equation}
Thus, the condition (\ref{eq:const-small-q-2}) is  satisfied provided that $\frac{a+b}2\, C_{\frac{a+b}2, 3/2} >1$ as soon as
$$ 2 -p' < \frac{p'}3\, \frac{\log \left(\frac{a+b}{b-a}\right) }{\log( \frac{a+b}2\,
C_{\frac{a+b}2, 3/2} )} , $$
and, therefore, as soon as we choose a $p' \in [3/2,2]$ satisfying
$$ 2-p' < \frac12\, \frac{\log \left(\frac{a+b}{b-a}\right) }{\log(\frac{a+b}2\,
C_{\frac{a+b}2, 3/2} ) }. $$
Note that in the case $\frac{a+b}2\, C_{\frac{a+b}2, 3/2} <1$, the eq. \eqref{dfg} implies that condition (\ref{eq:const-small-q-2}) is always satisfied for all $p' \in [3/2,2]$ since $\log(\frac{a+b}{b-a})>0$. 
\end{proof}

\subsection{Polynomial w.r.t. time bootstrap estimates}

We prove below a standard estimate for the heat equation (with Neumann
boundary condition) which amounts to proving that the corresponding
Green function has the same singularity as the Green function in the
case of the whole $x$-space $\R^N$
\par
We put the stress on the dependence of the constants w.r.t. the length
$T$ of the time interval (this dependence is only tracked with very
great effort in the classical books like \cite{LSU}, cf. Remark
\ref{rema} below).
 
\begin{lem}
  \label{lem33}
  Let $\Omega$ be a bounded domain of $\R^N$ with smooth
  ($C^{2+\alpha}$) boundary $\partial\Omega$ and $T > 0$.  Let $u$ be
  a solution of the forward heat equation with r.h.s. $f\in
  L^q(\Omega_T)$ and homogeneous Neumann boundary conditions.
\begin{equation}
    \label{eq:heat-Neumann}
    \left\{
      \begin{aligned}
        &\partial_t u - d \,\Delta_x u = f &&\quad \text{ on } \quad \Omega_T,
        \\
        &u(0,x) = u_0(x) &&\quad \text{ for } \quad x \in \Omega,
        \\
        &\nabla u \cdot \nu(x) = 0 &&\quad \text{ on } \quad [0,T]
        \times \partial \Omega.
      \end{aligned}
    \right.
  \end{equation}
\begin{description}
\item If $1<q < \frac{N+2}{2}$, we consider $s =
  \frac{q\,(N+2)}{N+2-2q}>0,$ and assume further that the initial
  datum $u_0$ belong to $L^s(\Omega)$.
  \par
  Then, for any $0 < \epsilon < s-1$,
  there is a constant $C_T > 0$ depending on $N$, $\Omega$, $\epsilon$, $d$, $q$,  $\|u_0\|_{L^s(\Omega)}$, $\|f\|_{L^q(\Omega_T)}$ and which has an at most polynomial dependence w.r.t. $T$, such that
  \begin{equation}
    \label{eq:heat-Lp-gain}
    \|u\|_{L^{s-\epsilon}(\Omega_T)} \leq C_T.
  \end{equation}
\item  On the other hand, if $q \ge \frac{N+2}{2}$,  we assume that the initial datum $u_0$ belongs to $L^{\infty}(\Omega)$.
  Then for any $r\in [1, +\infty[$, there exists a constant $C_T$ depending on $N$, $\Omega$, $r$, $d$, $q$, 
  $\|u_0\|_{L^{\infty}(\Omega)}$, $\|f\|_{L^q(\Omega_T)}$ and which has an at most polynomial dependence w.r.t. $T$, such that
  \begin{equation}
    \label{eq:heat-Linfty-gain}
    \|u\|_{L^{r}(\Omega_T)} \leq C_T.
  \end{equation}
\end{description}
\end{lem}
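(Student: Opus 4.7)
The plan is to rely on Duhamel's formula together with pointwise heat-kernel estimates for the Neumann Laplacian $\Delta_N$. I would first decompose $u = u_1 + u_2$, where $u_2(t) = e^{t d \Delta_N} u_0$ solves the homogeneous Neumann heat equation with initial datum $u_0$, and $u_1(t) = \int_0^t e^{(t-\tau) d \Delta_N} f(\tau,\cdot)\, d\tau$ solves the inhomogeneous equation with zero initial datum. Since the Neumann heat semigroup is an $L^p(\Omega)$-contraction for every $p \in [1,\infty]$, one has $\|u_2(t,\cdot)\|_{L^p(\Omega)} \le \|u_0\|_{L^p(\Omega)}$; taking $p = s$ in the first case and $p = \infty$ in the second case and integrating in time immediately yields $\|u_2\|_{L^s(\Omega_T)} \le T^{1/s}\, \|u_0\|_{L^s(\Omega)}$, respectively $\|u_2\|_{L^r(\Omega_T)} \le T^{1/r}\, \|u_0\|_{L^\infty(\Omega)}$, both polynomial in $T$.

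The heart of the argument is the bound on $u_1$. The key ingredient is the Gaussian estimate for the Neumann heat kernel on a $C^{2+\alpha}$ domain, which has the same local singularity as the whole-space kernel and implies the smoothing inequality
\begin{equation*}
  \|e^{t d \Delta_N} g\|_{L^{s}(\Omega)}
  \le
  C \bigl(1 + t^{-\frac{N}{2}(1/q - 1/s)}\bigr)\, \|g\|_{L^q(\Omega)},
  \qquad 1 \le q \le s \le \infty.
\end{equation*}
Applying this pointwise in $\tau$ to $g = f(\tau,\cdot)$ in Duhamel's formula and then invoking Young's convolution inequality in the time variable, with exponents satisfying $1 + 1/s = 1/\rho + 1/q$, would give
\begin{equation*}
  \|u_1\|_{L^s(\Omega_T)}
  \le
  C\, \|K\|_{L^\rho(0,T)}\, \|f\|_{L^q(\Omega_T)},
  \qquad K(\tau) := 1 + \tau^{-\frac{N}{2}(1/q - 1/s)},
\end{equation*}
provided $\frac{N}{2}(1/q - 1/s)\,\rho < 1$. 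Unwinding the exponent relations, this integrability condition on $K$ is exactly $1/q - 1/s < 2/(N+2)$, i.e., $s$ strictly less than the endpoint $q(N+2)/(N+2-2q)$. Thus choosing $s - \epsilon$ for arbitrarily small $\epsilon>0$ produces estimate \eqref{eq:heat-Lp-gain}, with the constant $\|K\|_{L^\rho(0,T)}$ polynomial in $T$ (the $\tau^{-\sigma\rho}$ part integrates to a power of $T$ and the constant part to $T$ itself).

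For the second case $q \ge (N+2)/2$, the same Duhamel/Young machinery applies with $s$ replaced by any finite $r$: the condition $1/q - 1/r < 2/(N+2)$ holds strictly for every $r \in [1,\infty)$ (and even for $r=\infty$ when $q > (N+2)/2$), so the convolution step goes through and yields \eqref{eq:heat-Linfty-gain}. The main obstacle I anticipate is justifying the Gaussian bound on the Neumann heat kernel with a constant that is genuinely polynomial in $T$: although the short-time singularity matches that of the Euclidean heat kernel, cleanly separating the small-$\tau$ parabolic behaviour from the long-time diffusive behaviour on the bounded domain $\Omega$ and tracking the constants through the Young convolution is the delicate bookkeeping step that makes classical references such as \cite{LSU} quotable but cumbersome to unpack.
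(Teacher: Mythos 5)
Your argument is correct in outline, but it takes a genuinely different route from the paper. You decompose $u$ via Duhamel's formula and combine the $L^q(\Omega)\to L^s(\Omega)$ smoothing estimate for the Neumann semigroup (with the singularity $t^{-\frac{N}{2}(1/q-1/s)}$) with Young's convolution inequality in time; your exponent bookkeeping is right, the integrability threshold $1/q-1/s<2/(N+2)$ does reproduce the endpoint $s=q(N+2)/(N+2-2q)$, and $\|K\|_{L^\rho(0,T)}$ is indeed a sum of positive powers of $T$. The paper instead runs an entirely elementary Moser-type iteration: it tests \eqref{eq:heat-Neumann} with $p\,|u|^{p-1}\mathrm{sgn}(u)$, uses a Gronwall lemma to control $\sup_t\|u\|_{L^{p_n}_x}$, uses the gradient term plus the Sobolev embedding to control $\int_0^T\|u\|_{L^{s_n}_x}^{p_n}\,dt$, iterates the exponents to the fixed point $p_\infty=Nq/(N+2-2q)$, and finally interpolates the two families of bounds to land in $L^{p_\infty\frac{N+2}{N}-\eps}(\Omega_T)=L^{s-\eps}(\Omega_T)$. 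What your approach buys is brevity and a one-shot proof with no iteration; what it costs is that the entire burden is shifted onto the Gaussian (or ultracontractivity) bound for the Neumann heat kernel on a bounded $C^{2+\alpha}$ domain with a time-independent constant --- exactly the kernel-level input that Remark \ref{rema} of the paper identifies as hard to quote cleanly from references such as \cite{LSU}, and which you correctly flag as the delicate point but do not actually supply. To make your proof complete you would need to (i) cite or prove that Gaussian upper bound (it does hold, and the constant is uniform in $t$ because for $t\ge 1$ the contraction property plus $L^1\to L^\infty$ smoothing at time $1$ takes over), and (ii) note that writing $u=u_1+u_2$ presupposes that the weak solution coincides with the mild solution. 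The paper's energy method avoids both issues and makes the polynomial dependence on $T$ of every constant transparent by construction, which is the real point of the lemma.
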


\begin{rem} \label{rema} The statement of the above lemma is classical
  (and non even optimal) except that it crucially shows that the
  regularising effect of a parabolic equation involves constants which
  depend polynomially on the time interval $[0,T]$ for all $T>0$. In
  1D, this was already shown in \cite{DF08} using a Fourier
  representation of the solution. For general domains however, the
  polynomial dependence of the constants seems to be nowhere in the
  literature. Moreover, tracking the constants, for instance, in the
  approach of \cite{LSU} (where the Green function for the half-space
  problem are used locally along the sufficiently smooth boundary as
  transformed approximating problem) is much more difficult than the
  following proof.
\end{rem}

\begin{proof}[Proof of Lemma \ref{lem33}]
We consider a solution of eq. \eqref{eq:heat-Neumann}.
\smallskip

Step 1: Setting $p_0=q$ and testing \eqref{eq:heat-Neumann} with 
$p_0\,|u|^{p_0-1}\mathrm{sgn}(u)$ (more precisely by testing with a smoothed version of the modulus $|u|$ and its derivative $\mathrm{sgn}(u)$ and letting then the smoothing 
tend to zero) we obtain by integration-by-parts and H\"olders inequality with the constant $C_0(p_0):=\frac{4(p_0-1)\,d}{p_0}$~:
\begin{equation}\label{eq:heat-lp}
 \frac{d}{dt} \bigg( \|u\|_{L^{p_0}_x}^{p_0} \bigg) 
 +  C_0(p_0,d)\iO \left|\nabla_x (u^{p_0/2})\right|^2dx \le p_0\,\|f\|_{L^{p_0}_x}\,(\|u\|_{L^{p_0}_x}^{p_0})^{\frac{p_0-1}{p_0}}.
\end{equation}
Then, the Gronwall lemma 
$
\dot y \le \alpha(t) y^{1-\frac{1}{p_0}} \Rightarrow\
y(T) \le \left[y(0)^{\frac{1}{p_0}} + \frac{1}{p_0}\int_0^T \alpha(t)\,dt\right]^{p_0} 
$
yields for all $T>0$~:
\begin{align*}
\|u\|_{L^{p_0}_x}^{p_0}(T)& \le \left[\|u_0\|_{L^{p_0}_x}+\int_0^T \|f\|_{L^{p_0}_x}(s)\,ds\right]^{p_0} \le 
\left[\|u_0\|_{L^{p_0}_x}+\|f\|_{L^{p_0}_{t,x}}T^{\frac{p_0-1}{p_0}}\right]^{p_0}\nonumber\\
&\le 2^{p_0-1}\left[\|u_0\|_{L^{p_0}_x}^{p_0} + \|f\|_{L^{p_0}_{t,x}}^{p_0} T^{p_0-1}\right]:=C_{T,0}.\nonumber
\end{align*}
We thus conclude that there is a constant $C_{T,0}$ depending only on 
$p_0$, $d$, $\|u_0\|_{L^{p_0}_x}$, $\|f\|_{L^{p_0}_{t,x}}$ and polynomially on $T$ such that
\begin{equation}
\sup\limits_{t\in[0,T]}\|u\|_{L^{p_0}_x}^{p_0}(t) \le C_{T,0}. \label{eq:CT0}
\end{equation}

Step 2: Gradient estimate and Sobolev embedding. 
The integration-in-time of \eqref{eq:heat-lp} and H\"older's inequality show
\begin{equation*}
C_0\iOT \left|\nabla_x (u^{p_0/2})\right|^2\,dx \le 
\|u_0\|_{L^{p_0}_x}^{p_0}+ p_0\, \|f\|_{L^{p_0}_{t,x}}\, \|u\|_{L^{p_0}_{t,x}}^{p_0-1}.
 \end{equation*}
The above estimate and Sobolev's embedding for $H^1$ with constant $C_{\mathrm{S}}$ yields for $s_0<\infty$ for $N=2$ and $s_0=\frac{p_0N}{N-2}$ for $N>2$
(together with Young's inequality)
\begin{align}
 \int_0^T \|u\|_{L^{s_0}_x}^{p_0}\,dt &\le \frac{C^2_{\mathrm{S}}}{C_0}\left[\|u_0\|_{L^{p_0}_x}^{p_0} + p_0\, \|f\|_{L^{p_0}_{t,x}} (T\,C_{T,0})^{\frac{p_0-1}{p_0}}
\right]
:= D_{T,0}, \label{eq:DT0}
 \end{align}
where $D_{T,0}$ is a constant depending only on 
$p_0$, $d$, $\|u_0\|_{L^{p_0}_x}$, $\|f\|_{L^{p_0}_{t,x}}$ and polynomially on $T$. 
\medskip

In the Steps 3 and 4, we construct a sequence of exponents $p_n$, $s_n$ and bounds
\begin{align}
\sup\limits_{t\in[0,T]}\|u\|_{L^{p_n}_x}^{p_n}(t) &\le C_{T,n}, \label{eq:CTn}\\
 \int_0^T \|u\|_{L^{s_n}_x}^{p_n}\,dt &\le D_{T,n}. \label{eq:DTn}
\end{align}
In particular we set $s_n<\infty$ if $N=2$ and $s_n=p_n \frac{N}{N-2}$ if $N\ge3$. 
\medskip

Step 3: Iteration of \eqref{eq:CTn}: Similar to Step 1 we test \eqref{eq:heat-Neumann} with 
$p_{n+1}\,u^{p_{n+1}-1}$~:
\begin{equation}
\frac{d}{dt} \bigg(  \|u\|_{L^{p_{n+1}}_x}^{p_{n+1}} \bigg)
 + C_{n+1} \iO \left|\nabla_x (u^{p_{n+1}/2})\right|^2dx = p_{n+1}\,\iO f\, u^{p_{n+1}-1}\,dx, 
\label{eq:lpn}
\end{equation}
where $C_{n+1}(p_{n+1}):=\frac{4(p_{n+1}-1)\,d}{p_{n+1}}$. In order to iterate the bound \eqref{eq:CTn},
we fix the exponent $p_{n+1}$ by introducing the $n$-independent exponent 
\begin{equation}
r = \frac{\frac{s_n}{p_0}-1}{s_n-p_{n+1}}:=1-\frac{2}{N}+\frac{2}{N p_0}
\quad\text{iff}\quad N\ge3\quad\text{which satisfy}\ \frac{1}{p_0}<r<1, 
\label{eq:r}
\end{equation}
and any $r$ satisfying $\frac{1}{p_0}<r<1$ if $N=2$. Then,
we estimate with $p_{n+1}-1=p_{n+1}(1-r)+s_n(r-1/p_0)$ the above right-hand side of \eqref{eq:lpn} by H\"older's inequality
\begin{align}
\iO f\,u^{s_n(r-1/p_0)}\, u^{p_{n+1}(1-r)} \,dx \le \|f\|_{L^{p_0}_x}\, \|u\|_{L^{s_{n}}_x}^{s_n(r-1/p_0)}\, \|u\|_{L^{p_{n+1}}_x}^{p_{n+1}(1-r)},
\label{eq:lpnHolder}
\end{align}
and a Gronwall estimate for $\dot y \le \alpha(t) y^{1-r}$
yields 
\begin{multline}
\|u\|_{L^{p_{n+1}}_x}^{p_{n+1}}(T)\le \left[\|u_0\|_{L^{p_{n+1}}_x}^{r\,p_{n+1}}+p_{n+1}r \int_0^T \|f\|_{L^{p_0}_x}\, \|u\|_{L^{s_{n}}_x}^{s_n(r-1/p_0)} \,dt\right]^{1/r}\nonumber\\
\le \left[\|u_0\|_{L^{p_{n+1}}_x}^{r\,p_{n+1}}+p_{n+1}r \|f\|_{L^{p_0}_{t,x}} \biggl(\int_0^T  \|u\|_{L^{s_{n}}_x}^{s_n(r-1/p_0)\frac{p_0}{p_0-1}} \,dt\biggr)^{\frac{p_0-1}{p_0}}\right]^{1/r}.
\end{multline}
Thus, by the definition of $r$ we have   
$
s_n(r-1/p_0)\frac{p_0}{p_0-1} = s_n\frac{N-2}{N}=p_n,
$
and we are able to use the bound \eqref{eq:DTn} to obtain
\begin{equation}
\|u\|_{L^{p_{n+1}}_x}^{p_{n+1}}(T)\le \left[\|u_0\|_{L^{p_{n+1}}_x}^{r\,p_{n+1}}+p_{n+1}r \|f\|_{L^{p_0}_{t,x}} D_{T,n}^{\frac{p_0-1}{p_0}}\right]^{1/r} =: C_{T,n+1}.
\label{eq:heat-lpn}
\end{equation}

Step 4: Iteration of \eqref{eq:DTn}: Returning to \eqref{eq:lpn} and \eqref{eq:lpnHolder}, we collect
\begin{multline*}
\frac{1}{r}\, \frac{d}{dt} \bigg( [\|u\|_{L^{p_{n+1}}_x}^{p_{n+1}}]^r \bigg)
 +  C_{n+1}\,\|u\|_{L^{p_{n+1}}_x}^{(r-1)\,p_{n+1}}\iO \left|\nabla_x (u^{\frac{p_{n+1}}{2}})\right|^2dx\\ \le p_{n+1}\,\|f\|_{L^{p_0}_x}\, \|u\|_{L^{s_{n}}_x}^{s_n(r-\frac{1}{p_0})}.
\end{multline*}
Since $r<1$ we have $\|u\|_{L^{p_{n+1}}_x}^{p_{n+1}(r-1)}\ge (C_{T,n+1})^{r-1}$ and integration-in-time and H\"older's inequality as in \eqref{eq:lpnHolder} yield
\begin{equation*}
(C_{T,n+1})^{r-1}\, r\, C_{n+1}\iOT\left|\nabla_x (u^{p_{n+1}/2})\right|^2dx \le \|u_0\|_{L^{p_{n+1}}_x}^{r\,p_{n+1}} +  
r\,p_{n+1}\,\|f\|_{L^{p_0}_{t,x}}\, D_{T,n}^{\frac{p_0-1}{p_0}}.
\end{equation*}
Finally, with $s_{n+1}=p_{n+1}\frac{N}{N-2}$ if $N>2$ and using Sobolev's embedding,  
\begin{equation*}
\int_0^T \|u\|_{L^{s_{n+1}}_x}^{p_{n+1}}\,dt \le \frac{C^2_{\mathrm{S}} C_{T,n+1}^{1-r}}{C_{n+1}\,r}\left[\|u_0\|_{L^{p_{n+1}}_x}^{r\,p_{n+1}} +  
r\,p_{n+1}\|f\|_{L^{p_0}_{t,x}}\, D_{T,n}^{\frac{p_0-1}{p_0}}\right] =: D_{T,n+1}.
\end{equation*}

Step 5: Iteration in $n$. From the definition of $r$ in \eqref{eq:r} it follows that 
$p_{n+1} = s_n (1-\frac{1}{rp_0})+\frac{1}{r}$
and thus $p_{n+1}<\infty$ if $N=2$ and for dimensions 
$N\ge3$, where $s_n=p_n\frac{N}{N-2}$~: 
$$
p_{n+1} = p_n \frac{N}{N-2}\left(1-\frac{1}{rp_0}\right)+\frac{1}{r} = p_n \frac{N(p_0-1)}{p_0(N-2)+2}+\frac{N\,p_0}{p_0(N-2)+2}
$$
which has the fixed point $p_{\infty} = \frac{N p_0}{N+2-2p_0}$ and 
$$
p_{\infty}<0 \quad\Longleftrightarrow\quad p_0>\frac{N+2}{2} 
\quad\Longleftrightarrow\quad \frac{N(p_0-1)}{p_0(N-2)+2}>1.
$$
Thus, with $p_0=q$ and $f\in L^q_{t,x}$ we distinguish the cases 
\begin{equation}
\begin{cases}
q<\frac{N+2}{2} \quad\text{where}\quad p_n \xrightarrow{n\to\infty} p_{\infty}>q\quad\text{for}\quad q>1,\\
q \ge \frac{N+2}{2} \quad\text{where}\quad p_n \xrightarrow{n\to\infty} +\infty.
\end{cases}
\end{equation}
In dimension $N=2$ we can always choose $p_{\infty}<+\infty$ to be arbitrarily large. Note that for any $n$ in the iteration, the constants $C_{T,n}$ and $D_{T,n}$ are polynomial with respect to $T$!
\medskip

Step 6: Interpolation of \eqref{eq:CTn} and \eqref{eq:DTn}
in the cases $p_{\infty}<+\infty$ (and thus $N\ge3$). 
For any $n$ we use H\"older's inequality
\begin{align*}
\iOT u^{p_n}\,u^{p_n\frac{2}{N}}\,dxdt\le \int_0^T \|u\|_{L^{s_n}_x}^{p_n}\,\|u\|_{L^{p_n}_x}^{\frac{2}{N} p_n}\,dt\le C_{T,n}^{\frac{2}{N}} D_{T,n}.
\end{align*}
In the limit $n\to\infty$ we find $p_{\infty}\frac{N+2}{N}=\frac{(N+2)q}{N+2-2q}>0$.

Thus, for all $\eps>0$ and in all dimensions $N\ge3$ we obtain after finitely many iterations the following bound   
$$
\| u \|_{L^{p_{\infty}\frac{N+2}{N}-\eps}_{t,x}} \le C_{T}(\|u_0\|_{L^{p_{\infty}}_x}, \|f\|_{L^{q}_{t,x}}, q,d, C_S),
$$ 
where $C_T$ is a constant depending only on  $\|u_0\|_{L^{p_{\infty}}_x}$, $\|f\|_{L^{q}_{t,x}}$, $d$, $q$, the Sobolev constant $C_{\mathrm{S}}$, and $T$. Moreover, $C_T$ depends polynomially on $T$.
\end{proof}

\begin{rem}
We remark that
$$
\frac{1}{p_{\infty}\frac{N+2}{N}-\eps} = \frac{1}{q}-1+\frac{N}{N+2}+O(\eps),
$$
which corresponds to the regularity expected by convolution with the heat kernel being in $L^{\frac{N+2}{N}-\mu}$, for all $\mu>0$.
\end{rem}

When applied to the quadratic ``four species'' eq. (\ref{trois}), the bootstrap above yields the following
lemma:

\begin{lem}
  \label{lem36}
  Let $\Omega$ be a bounded smooth ($C^{2+\alpha}$) open subset of $\R^N$ and $T > 0$.
  Consider then a weak solution $\{a_i\}_{i=1..4}$ to equation
  (\ref{trois}), (\ref{CN}) on $[0,T]$ with initial condition $\{a_{i0}\}_{i=1..4}\in
  L^{\infty}(\Omega_T)$
 and diffusion rates $d_i>0$, $[i=1..4]$.
\par
Assume that $\{a_i\}_{i=1..4}$ 
  lie in $L^{q_0}(\Omega_T)$ for some $q_0 > (N+2)/2$, and that $\|a_i\|_{L^{q_0}(\Omega_T)}$ grows at most polynomially w.r.t. $T$ for $i=1..4$.
\par
 Then, for any $r \in [1, +\infty[$, we have 
$\{a_i\}_{i=1..4} \in L^{r}(\Omega_T)$ and  $\|a_i\|_{L^{r}(\Omega_T)}$ grows at most polynomially w.r.t. $T$.
\end{lem}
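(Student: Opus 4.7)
The plan is to bootstrap the integrability of the $a_i$'s by applying Lemma~\ref{lem33} repeatedly to each of the four heat equations in \eqref{trois}, treating the bilinear source term $\pm(a_2 a_4 - a_1 a_3)$ as a forcing. The governing observation is that if each $a_i$ is bounded in $L^q(\Omega_T)$ with norm polynomial in $T$, then H\"older's inequality controls the source in $L^{q/2}(\Omega_T)$ with polynomial $T$-dependence preserved; and since the initial data $a_{i0}\in L^\infty(\Omega)$ lies in every $L^s(\Omega)$, Lemma~\ref{lem33} applies without any integrability restriction on $u_0$.

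Concretely, suppose at some stage $a_i\in L^{q_n}(\Omega_T)$ with $q_n > (N+2)/2$ and polynomial-in-$T$ norm. I would distinguish two cases. If $q_n \ge N+2$, then the forcing lies in $L^{q_n/2}(\Omega_T)$ with $q_n/2 \ge (N+2)/2$, and the second alternative of Lemma~\ref{lem33} directly gives $a_i \in L^r(\Omega_T)$ for every $r<\infty$, with polynomial-in-$T$ bounds, which is what we want. Otherwise the first alternative of Lemma~\ref{lem33} applied with $q=q_n/2$ produces, for every $\epsilon>0$, a new bound in $L^{q_{n+1}-\epsilon}(\Omega_T)$ with $q_{n+1}$ defined by
$$\frac{1}{q_{n+1}} = \frac{2}{q_n} - \frac{2}{N+2}.$$
Setting $y_n := 1/q_n$, this recursion reads $y_{n+1} - \tfrac{2}{N+2} = 2\bigl(y_n - \tfrac{2}{N+2}\bigr)$, so the distance from the fixed point doubles at each step. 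Since the hypothesis $q_0 > (N+2)/2$ means $y_0 < 2/(N+2)$, the sequence $y_n$ strictly decreases and reaches a non-positive value in finitely many steps; equivalently, $q_n$ eventually exceeds $N+2$, at which point the easy case closes the argument.

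The main technical point to monitor is the preservation of the polynomial $T$-dependence of the bounds through the finitely many iteration steps. This is built into Lemma~\ref{lem33} (whose constants grow at most polynomially in $T$) and is preserved by H\"older's inequality when bounding the quadratic nonlinearity, so at each stage the hypothesis of polynomial growth in $T$ is transmitted to the next. A minor housekeeping point is that each invocation of Lemma~\ref{lem33} costs an arbitrarily small $\epsilon_n > 0$ in the exponent; since the total number of iterations is finite and $y_n$ strays from $2/(N+2)$ geometrically, one can choose the $\epsilon_n$ small enough that the recursion for $q_n$ is not disrupted and the terminal exponent $q_N \ge N+2$ is reached in a finite, controlled number of steps.
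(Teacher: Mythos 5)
Your proposal is correct and follows essentially the same route as the paper: bootstrap via Lemma~\ref{lem33} with the quadratic right-hand side placed in $L^{q_n/2}(\Omega_T)$, iterate the exponent until it exceeds $N+2$, and then invoke the second alternative of Lemma~\ref{lem33} while tracking the polynomial dependence on $T$ and the small $\epsilon$-losses. Your recursion $\frac{1}{q_{n+1}}=\frac{2}{q_n}-\frac{2}{N+2}$ is just the paper's iteration \eqref{eq:iteration} rewritten in terms of $1/q_n$, and your fixed-point-doubling argument for finite termination is equivalent to the paper's observation that the ratios $q_{n+1}/q_n$ increase.
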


\begin{proof}[Proof of Lemma \ref{lem36}] 
  We use Lemma \ref{lem33} repeatedly. In general, if $a_i
  \in L^q(\Omega_T)$ for some $q > 2$, then $a_i a_j \in
  L^{q/2}(\Omega_T)$ ($i,j=1,\dots,4$). Hence the right-hand side of
  eq. \eqref{trois} is in $L^{q/2}$, so from Lemma
  \ref{lem33} we have
  \begin{equation*}
    a_i \in L^{r- \delta}(\Omega_T)
    \quad \text{ with }
    r =
    \begin{cases}
      \frac{1}{2} \frac{q(N+2)}{N+2-q}
      \quad &\text{ if } 1<q < N+2,
      \\
      \infty \quad &\text{ if } q \ge N+2,
    \end{cases}
  \end{equation*}
for any $\delta>0$.
If we define the sequence $q_n$ starting with the $q_0$ given in the Lemma, and satisfying
  \begin{equation}
    \label{eq:iteration}
    q_{n+1} = \frac{1}{2} \frac{q_n(N+2)}{N+2-q_n},
 \quad\text{as long as}\quad q_n < N+2,
   \end{equation}
one can readily check that $q_{n+1}>q_n $ is equivalent to 
$q_n>\frac{N+2}{2}$ and thus $\frac{q_{n+2}}{q_{n+1}}>\frac{q_{n+1}}{q_{n}}$ and we obtain within finitely many iterations that    
  \begin{equation*}
    a_i \in L^{q_n - \delta}(\Omega_T), \quad\text{with}\quad q_n >
  N+2     \quad \text{for some } n \geq 0, \quad \text{and any} \quad \delta>0.
  \end{equation*}
Thus, applying once more Lemma \ref{lem33}, we 
end up with $a_i \in L^{r}(\Omega_T)$ for any $r \in [1, +\infty[$.
\end{proof}

In order to get an $L^{\infty}$ estimate, we need one more computation:

\begin{lem}
  \label{lem37}
 Let $\Omega$ be a bounded domain of $\R^N$ with smooth ($C^{2+\alpha}$) boundary $\partial\Omega$ and $T > 0$.  Consider a solution $\{a_i\}_{i=1..4}$ to the system
  \eqref{trois}, \eqref{CN} on $[0,T]$ with initial condition $\{a_{i0 }\}_{i=1..4}\in
  L^{\infty}(\Omega)$ and diffusion rates $d_i>0$ $[i=1..4]$.
\par
Assume that $\{a_i\}_{i=1..4}$ is a weak solution to the system \eqref{trois}, \eqref{CN} on $[0,T]$
satisfying $a_i \in L^{q_0}(\Omega_T)$ for $i=1..4$ and some $q_0 > (N+2)/2$. Also
assume that $\|a_i\|_{L^{q_0}(\Omega_T)}$ grows at most polynomially w.r.t. $T$.
\par
 Then,  
$$
\|a_i\|_{L^{\infty}(\Omega_T)}\le C_T, \qquad i=1..4,
$$ 
where $C_T$ grows at most polynomially w.r.t. $T$.
\end{lem}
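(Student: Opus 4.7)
The plan is to combine Lemma \ref{lem36} with a final parabolic regularity step exploiting the smoothing properties of the Neumann heat semigroup on $\Omega$.

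\textbf{Step 1.} First, I would apply Lemma \ref{lem36} to upgrade the hypothesis $a_i \in L^{q_0}(\Omega_T)$ (with $q_0 > (N+2)/2$) to the conclusion that $a_i \in L^r(\Omega_T)$ for every finite $r$, with $\|a_i\|_{L^r(\Omega_T)}$ growing at most polynomially in $T$. In particular, the reaction term $R_i := \pm(a_2\, a_4 - a_1\, a_3)$ then belongs to $L^q(\Omega_T)$ for every finite $q$, with norm polynomial in $T$.

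\textbf{Step 2.} Fixing any exponent $q > (N+2)/2$, I would represent $a_i$ via Duhamel's formula against the Neumann heat semigroup $(e^{\tau d_i \Delta_x})_{\tau \ge 0}$ on $\Omega$:
\[
a_i(t,\cdot) = e^{t d_i \Delta_x} a_{i0} + \int_0^t e^{(t-s) d_i \Delta_x} R_i(s,\cdot)\,ds.
\]
The first term is bounded in $L^\infty$ by $\|a_{i0}\|_{L^\infty(\Omega)}$, since the Neumann heat semigroup is a contraction on $L^\infty$ by the maximum principle. For the second term, I would invoke the classical ultracontractivity estimate $\|e^{\tau d_i \Delta_x} g\|_{L^\infty(\Omega)} \le C\,\tau^{-N/(2q)}\,\|g\|_{L^q(\Omega)}$, valid on $C^{2+\alpha}$ bounded domains with a constant $C$ independent of $T$, combined with H\"older's inequality in time with conjugate exponents $q,q'$. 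The resulting time integral $\int_0^t (t-s)^{-N q'/(2q)}\,ds$ converges precisely when $q > (N+2)/2$. Taking the supremum over $t \in [0,T]$ and invoking the Step~1 bound on $\|R_i\|_{L^q(\Omega_T)}$ then yields $\|a_i\|_{L^\infty(\Omega_T)} \le C_T$ with $C_T$ polynomial in $T$.

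\textbf{Main obstacle.} The only delicate point is justifying the Neumann heat-kernel ultracontractivity with a constant uniform in $T$, which is a standard consequence of the Gaussian upper bounds on the Neumann heat kernel for $C^{2+\alpha}$ domains. An alternative route, more in the spirit of this paper, would be to iterate a Moser-type $L^p$ scheme as in the proof of Lemma \ref{lem33}, pushing the exponent to $+\infty$ while tracking the polynomial $T$-dependence through the Sobolev and interpolation constants at each step. Either way, this final step is the only ingredient going beyond Lemma \ref{lem33}, since that lemma by itself yields bounds only in $L^r(\Omega_T)$ for $r < +\infty$.
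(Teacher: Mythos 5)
Your proposal is correct, and the opening move is the same as the paper's: both proofs first invoke Lemma \ref{lem36} to push the densities into $L^r(\Omega_T)$ for every finite $r$ with polynomially growing norms, so that the quadratic right-hand side $R_i$ lies in $L^q(\Omega_T)$ for every finite $q$. Where you diverge is the final passage from $L^{q}$ data to $L^\infty$. The paper stays within the maximal-regularity framework it has already set up: it cites Lamberton's result \cite{JFA} to conclude that $\pa_t a_i$ and $\pa_{x_jx_k}a_i$ belong to $L^r(\Omega_T)$ for all finite $r$ (with $T$-independent regularity constants), and then applies an anisotropic Sobolev embedding in $\R^{N+1}$, via an extension/restriction operator, to land in $L^\infty(\Omega_T)$. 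You instead use the Duhamel representation together with the $L^q$--$L^\infty$ ultracontractivity of the Neumann heat semigroup and H\"older in time; your exponent bookkeeping is right ($\int_0^t(t-s)^{-Nq'/(2q)}\,ds<\infty$ exactly when $q>(N+2)/2$, which is harmless here since every finite $q$ is available), and you correctly isolate the one point needing care, namely the $T$-uniformity of the smoothing estimate, which indeed follows from Gaussian upper bounds on the Neumann heat kernel for small times together with the boundedness of the kernel for large times on a bounded $C^{2+\alpha}$ domain. Your route is somewhat more self-contained at the price of requiring kernel bounds and the identification of the weak solution with the mild one; the paper's route reuses a reference it already relies on elsewhere and avoids heat-kernel estimates entirely, but needs the (slightly less elementary) parabolic Sobolev embedding. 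Either argument delivers the polynomial dependence on $T$ claimed in the statement.
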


\begin{proof}[Proof of Lemma \ref{lem37}]
Using
 Lemma \ref{lem36}, we know that
$ \pa_t a_i - d_i\, \Delta_x a_i $ lies in $L^r(\Omega_T)$ for all $r \in [1, +\infty[$, with a
norm in $L^r(\Omega_T)$ which grows at most polynomially w.r.t. $T$.
\par
Using the result of \cite{JFA}, we obtain that the derivatives $\pa_t a_i$ and $\pa_{x_j x_k} a_i$ lie in 
$L^r(\Omega_T)$ for all $r \in [1, +\infty[$, with a
norm in $L^r(\Omega_T)$ which grows at most polynomially w.r.t. $T$.
\par
A standard Sobolev inequality (in $\R^{N+1}$) together with the use of an extension/restriction operator
implies that $a_i$ lies in $L^{\infty}(\Omega_T)$, with a
norm in $L^{\infty}(\Omega_T)$ which grows at most polynomially w.r.t. $T$.
\end{proof}

\subsection{Existence of bounded solutions and large time behaviour for the ``four species'' model}
\label{sec:4species-existence}

 The results of the previous subsections enable us to prove Proposition~\ref{propdeux} and
 Proposition~\ref{proptrois}.
\bigskip

\begin{proof}[Proof of Proposition \ref{propdeux}] The existence of a
  weak solution to eq. (\ref{trois}), (\ref{CN}) [with given initial
  data in $L^{\infty}(\Omega)$] is already known (cf. \cite{DFPV}). We
  observe that thanks to Lemma \ref{lem32}, we have a $L^p(\Omega_T)$
  estimate for $a_i$ ($i=1..4$) for some $p>2$. According to Lemma
  \ref{lem37}, the $a_i$ ($i=1..4$) lie in fact in
  $L^{\infty}(\Omega_T)$.  Since $T$ can be taken arbitrarily large,
  we end up with solutions defined on all $\R_+ \times \Omega$.
  \par
  Moreover, still according to Lemma \ref{lem37}, the
  $L^{\infty}(\Omega_T)$ bounds of the $a_i$ ($i=1..4$) are at most
  polynomially increasing w.r.t. $T$.  Using the
  entropy/entropy-dissipation estimate proved in \cite{DFEqua} (or
  \cite{DF08}, which used the assumption $N=1$ only in order to show
  at most polynomially growing $L^{\infty}$ bounds), we end up with
  the exponential decay towards equilibrium (\ref{quatre}), and in
  particular we get a uniform-in-time bound for the $a_i$ in
  $L^{\infty}(\R_+ \times \Omega)$.
\end{proof}
\medskip

\begin{proof}[Proof of Proposition \ref{proptrois}] Once again, the
  existence of a weak solution to eq. (\ref{trois}), (\ref{CN}) [with
  given initial data in $L^{\infty}(\Omega)$], is already known
  (cf. \cite{DFPV}).  Under the smallness assumption made on
  $\delta=b-a$, 
  Lemma~\ref{lem31} implies an $L^p(\Omega_T)$ estimate for $a_i$
  ($i=1..4$) for some $p>N/2 + 1$. Then, Lemma \ref{lem37} ensures
  that the $a_i$ ($i=1..4$) lie in fact in $L^{\infty}(\Omega_T)$.
  The end of the proof (that is, the estimates about the convergence
  towards equilibrium) is exactly the same as in the Proof of
  Proposition \ref{propdeux}.
\end{proof}

\section{Extensions: General chemical kinetics and degenerate diffusion rates}
\label{secquatre}

\begin{proof}[Proof of Proposition \ref{propquatre}] We introduce the approximated system of equations
constituted of the approximated equation (for any $r \in \N^*$)
\begin{equation}\label{genpolappr}
\pa_t a_i^r - d_i\,\Delta_x a_i^r = \frac{(\beta_i-\alpha_i)\left(l \,
\prod\limits_{j=1}^n (a_j^r)^{\alpha_j} - k\, \prod\limits_{j=1}^n (a_j^r)^{\beta_j} \, \right)}{1 + \frac1r\, \left( \sum_{j=1}^n (a_j^r)^2 \right)^{Q/2} } , \qquad i=1..n,
\end{equation}
where $Q= \sup\{ \sum_{i=1}^n \alpha_i, \sum_{i=1}^n \beta_i\}$ is defined as in the statement of the Proposition \ref{propquatre} and assumed to be superquadratic, i.e. we consider $Q\ge3$.
Here, eq. \eqref{genpolappr} is to be considered
together with the homogeneous Neumann boundary condition (\ref{CN}) and a set of smooth approximated 
initial data $\{a_{i0}^r\}_{i=1,..,n}$ (converging a.e. towards $\{a_{i0}\}_{i=1,..,n}$ as 
$r\to \infty$ and bounded in $L^{\infty}(\Omega)$).
\medskip

The existence of a smooth (strong) solution to this approximated system follows from standard existence results of systems of reaction-diffusion equations with bounded and Lipschitz-continuous r.h.s. (cf. \cite{desvillettes_milan,QS}, for example).
\medskip

With the assumption that at least two coefficients $\alpha_i - \beta_i$ are different from zero
and have opposite signs, one can find coefficients $\gamma_i>0$ such that
$$ 
\sum_{i=1}^n \gamma_i\, (\alpha_i - \beta_i) = 0 . $$

At first, we observe then that
$$ 
\pa_t \Bigl(\sum_{i=1}^n \gamma_i\, a_i^r \Bigr) - \Delta_x (M_r\, a_i^r) = 0, 
$$
where  
$$ 
M_r(t,x) = \frac{\sum_{i=1}^n \gamma_i\, d_i\, a_i^r(t,x)}{\sum_{i=1}^n \gamma_i\, a_i^r(t,x)} 
$$
is bounded $a\le M_r(t,x)\le b$ a.e. in $\Omega_T$.
\medskip
  
We now assume that
\begin{equation} \label{sdf}
b-a < 2\,(C_{\frac{a+b}2, Q'})^{-1}, \qquad \frac{1}{Q}+\frac{1}{Q'}=1.
\end{equation}
Using Proposition \ref{propun}, we see that for any $i=1,..,n$, the sequence
$(a_i^r)_{r\in \N}$ is bounded in $L^Q(\Omega_T)$ (for all $T>0$). Moreover, since \eqref{sdf} is a strict inequality, an interpolation argument similar to the one used in Lemma \ref{lem32} implies that $(a_i^r)_{r\in \N}$ is bounded in $L^{Q + \var}(\Omega_T)$ for
some sufficiently small $\var>0$. As a consequence, the quantities $\pa_t a_i^r - d_i\, \Delta_x a_i^r$ are
bounded in $L^{1 + \var/Q}(\Omega_T)$. 

The sequence $(a_i^r)_{r\in \N}$ converges therefore (up to extraction
of a subsequence) a.e. as well as strongly in $L^Q(\Omega_T)$ towards
a limit $a_i \in L^{Q + \var}(\Omega_T)$.  Finally, one can pass to
the limit $r\to\infty$ without difficulties, which ensures that the
limiting concentrations $a_i$ satisfy the original system in the weak
sense.  \medskip

Secondly, if we suppose 
$$ b-a < 2\,(C_{\frac{a+b}2, \frac{(Q-1)\,(N+2)}{(Q-1)\,(N+2) - 2} })^{-1},
$$
Proposition \ref{propun} (and the same interpolation argument as previously) ensures that the weak solution defined above satisfies the extra 
estimate 
$a_i \in L^{z_0}(\Omega_T)$, for some $z_0 > (1 + N/2)\,(Q-1)$. Then, 
$\pa_t a_i - d_i\, \Delta_x a_i \in L^{z_0/Q}(\Omega_T)$, and thanks to the properties of the
heat kernel summarised in Lemma \ref{lem33}, $a_i \in L^p(\Omega_T)$ for all 
$p< z_1$, with $z_1 = \frac{z_0}{Q - \frac2{N+2}\,z_0}$ (or all $p \in [1, +\infty[$ 
if $z_0 \ge Q\, (1 + N/2)$). 

We see that a simple bootstrap ensures that 
  $a_i \in L^p(\Omega_T)$ for all 
$p< z_k$, with $z_k= \frac{z_{k-1}}{Q - \frac2{N+2}\,z_{k-1}}$ (or all $p \in [1, +\infty[$ 
if $z_{k-1} \ge Q\, (1 + N/2)$). The sequence $z_k$ is increasing up to the point when 
$z_k \ge Q\, (1 + N/2)$, therefore we obtain the estimate $a_i \in L^p(\Omega_T)$ for all
$p \in [1, +\infty[$. We proceed as in Lemma \ref{lem37} to get the final estimate
$a_i \in L^{\infty}(\Omega_T)$.
\end{proof}

\begin{proof}[Proof of Proposition \ref{propcinq}] Existence of weak solutions 
(in $L^2(\Omega_T)$) for the set
of equations considered in this Proposition is shown in \cite{DF07}. 
\medskip

By adding the equations satisfied 
by $a_1$ and $a_2$, we see that
\begin{equation}\label{a1a2}
 \pa_t (a_1 + a_2) - \Delta_x (M\, (a_1+a_2)) =0, 
 \end{equation}
where $M(t,x) \in [\inf\{d_1,d_2\}, \sup\{d_1,d_2\}]$ almost everywhere.
\par
Adding the 
equations satisfied 
by $a_2$ and $a_3$, we also see that
\begin{equation}\label{a2a3}
\pa_t (a_2 + a_3) - \Delta_x (M\, (a_2+a_3)) =0, 
\end{equation}
where $M(t,x) \in [\inf\{d_2,d_3\}, \sup\{d_2,d_3\}]$ almost everywhere.
\par
As a consequence, we see that
thanks to
Proposition \ref{propun} and an interpolation
argument similar to the one used in Lemma \ref{lem32}, 
for some $\delta \in ]0, 2[$ (and any $T>0$),
$a_i \in L^{2+\delta}(\Omega_T)$, when $i=1,2,3$.
Then, $a_1\,a_3 \in L^{1+\delta/2}(\Omega_T)$. Since 
$$ \pa_t a_2 - d_2\, \Delta_x a_2 \le a_1\,a_3, $$
we see using the properties of the heat kernel in 2D (cf. Lemma \ref{lem33}) that  
$a_2 \in L^{\frac{2+\delta}{1 -\delta/2} - 0}(\Omega_T)$ (here and in the sequel,
the notation $L^{p-0}$ means $\cap_{q \in [1,p[} L^q$).
\medskip

Using a duality estimate (see e.g. \cite[Lemma 3.4]{Pie10}, \cite[Lemma 33.3]{QS}) for solutions of (\ref{a1a2}), it follows that 
if $a_2 \in L^q(\Omega_T)$ for any $1<q<\infty$ 
then also $a_1$ lies in $L^q(\Omega_T)$. Thus, 
we deduce from the estimate on $a_2$ that
$a_1$, $a_3 \in L^{\frac{2+\delta}{1 -\delta/2} - 0}(\Omega_T)$.
We build the (finite) increasing sequence $p_n \in ]2, 4[$
 such that $p_0 = 2 + \delta$, and 
$\frac1{p_{n+1}} = \frac2{p_n} - \frac12$. We denote by $N_0$ the last index such that
$p_{N_0} < 4$. 
The properties of the heat kernel in 2D (once again and in all the sequel, cf. Lemma \ref{lem33} for a precise exposition of those properties)  and the duality estimate of \cite{QS}
implies that 
    $a_i \in L^{p_{N_0+1}}(\Omega_T)$, when $i=1,2,3$.
    A last application of the properties of the heat kernel in 2D 
and the duality estimate of \cite{Pie10,QS} shows that
$a_i \in L^{\infty - 0}(\Omega_T)$, when $i=1,2,3$.
Finally, thanks to a computation similar to the one in Lemma~\ref{lem37},
 $a_2 \in L^{\infty}(\Omega_T)$.
\medskip

 Observing that 
  \begin{equation}\label{eqa4}
\pa_t a_4 \le a_1\, a_3, 
\end{equation}
 we also see (performing the integration in time) 
that $a_4 \in L^{\infty - 0}(\Omega_T)$.
 Then, for $i=1,3$,
 $$ \pa_t a_i - d_i\, \Delta_x a_i \le a_2\,a_4, $$
 so that thanks to the same computation as above (similar to the one in Lemma~\ref{lem37}),
 $$ a_1, a_3 \in L^{\infty}(\Omega_T). $$
 Using once again eq. (\ref{eqa4}), we obtain that 
 $a_4 \in L^{\infty}(\Omega_T)$.
\end{proof}

\section*{Acknowledgment}
The authors would like to thank very much Prof. Felix Otto for pointing out to us the Meyer's type estimates, 
which formed the starting point of our work.

\end{document}